%
%
%
\documentclass{svmult}

\usepackage{graphicx}
\usepackage{amssymb}
\usepackage{amsmath}
\usepackage{epsfig}
\usepackage{enumerate}
\usepackage{float}
\usepackage{mathptmx}
\usepackage{bm}
\usepackage{multirow}


\makeindex             


\begin{document}

\title*{Parameter Identification of Constrained Data by a New Class of Rational Fractal Function}
\titlerunning{Parameter Identification by Rational Fractal Function}
\author{S. K. Katiyar$^\dag$, A. K. B. Chand$^{\dag\bigstar}$ and Sangita Jha$^\dag$}
\institute{Submitted to ICMC-2017, Haldia\\\\$^\dag$Department of
Mathematics, Indian Institute of Technology Madras, Chennai -
600036, India\\\email{sbhkatiyar@gmail.com, chand@iitm.ac.in, sangitajha285@gmail.com}}
\maketitle
\abstract{This paper sets a
theoretical foundation for the applications of the fractal interpolation functions (FIFs). We construct rational cubic spline FIFs (RCSFIFs) with quadratic denominator involving two shape parameters. The elements of the iterated function system (IFS) in each subinterval are identified befittingly so that the graph of the resulting $\mathcal{C}^1$-RCSFIF  lies within a prescribed rectangle. These parameters include, in particular, conditions on the positivity of the $\mathcal{C}^1$-RCSFIF. The problem of visualization of constrained data is also addressed when the data is lying above a straight line, the proposed fractal curve is required to lie on the same side of the line. We illustrate our interpolation scheme with some numerical examples.}
\noindent\textbf{Keywords} Iterated Function System. Fractal Interpolation Functions. Rational cubic fractal functions. Rational cubic interpolation. Constrained Interpolation. Positivity\\
\noindent\textbf{MSC}  28A80. 26C15. 41A20. 65D10. 41A29. 65D05
\section{Introduction}\label{HALDIASANGEETAsec1}
It is often desirable, or even strictly required for physical interpretation, that a interpolating function preserves certain salient shape properties of scientific data such as convexity, monotonicity, positivity and constrained by a line or a surface. This problem of searching a sufficiently smooth function that guarantees the preservation of geometric shape properties inherent in given data is generally referred to as shape preserving interpolation/approximation, which receives considerable attention in research area of mathematics, computer science and engineering design. Without changing prescribed data or inserting additional knots, rational splines (see, for instance, \cite{DQ,DQ1,GD85,FC,SH,SHN,SHH}, and references therein) have been successfully replaced the ordinary polynomials in shape-preserving surroundings due to their ability to modify the shape of curves and surfaces via tension parameters. Moreover, they provide a powerful techniques among various techniques available in the classical 
numerical analysis for designing of curves, surfaces and some analytic primitives such as conic sections that are widely used in engineering design, data visualization, cartography and various computer graphics applications. A significant work in this direction by many authors has been contributed and there is a plethora of papers on its versatility in the literature.
\par
\noindent The word fractal interpolation function  was introduced by Barnsley \cite{B1} in 1986 based on the theory of IFS. He initiated the way to define a FIF as the fixed point of the Read-Bajraktarevi\'{c} operator defined on a suitable space of functions which becomes day by day a very powerful tool for approximating irregular objects. FIFs, different from traditional interpolation functions in the sense that their graphs are typical fractals or self-referential sets. The novelty of fractal interpolants lies in their ability to model a data set with either a class of smooth or a nonsmooth functions depending on the problem at hand. Further, differentiable FIFs \cite{B2} initiated a striking relationship between fractal functions and traditional nonrecursive interpolants. Thereafter, many authors
have worked in the area of constructing various types of FIFs, including spline FIFs  (see, for instance, \cite{CV2,N1,N4,NS3,NVCSS,WY}) and hidden variable FIFs \cite{B3,BPL,CK6,CK7}). In this way, the fractal methodology provides more flexibility and versatility on the choice of interpolant. Consequently, this function class can be useful for mathematical and engineering problems where classical spline interpolation approach may not work satisfactorily. To broaden their horizons, some special class of fractal interpolants are introduced and their shape preserving aspects are investigated recently in the literature. As a submissive contribution to this goal, Chand and coworkers have initiated the study on shape preserving fractal interpolation and approximation using various families of polynomial and rational IFSs (see, for instance, \cite{CV2,CVN,CNVS}).
\par
\noindent Constraining an interpolation to be shape preserving is a well established technique for modeling scientific data. The existing research on shape preserving FIFs mainly concern about the three important shape properties, namely, positivity, monotonicity, and convexity. The fractal functions are not well explored in the field of constrained interpolation and the current article is an attempt in this direction. This paper presents a description and analysis of a RCSFIF that has two shape parameters associated with each interval. The elements of the IFS in each subinterval are identified befittingly so that the graph of the resulting $\mathcal{C}^1$-RCSFIF  lies within a prescribed rectangle. The problem of keeping the graph of $\mathcal{C}^1$-RCSFIF within a rectangle includes, in particular, the problem of positivity of the $\mathcal{C}^1$-RCSFIF and hence, improves the sufficient conditions for the positivity already appeared in \cite{SHH}. The problem of visualization of constrained data is also 
addressed when the data is lying above a straight line, the corresponding fractal curves are required to lie on the same side of the line. To obtain the visually desirable shape, scaling factors and shape parameters can be adjusted by using optimization techniques. The $\mathcal{C}^1$-RCSFIF scheme is, in general, global because the variations in the scaling factor of a particular subinterval may influence the entire configuration. However, it recovers the traditional rational interpolation scheme \cite{SHH}, when the scaling factor in each subinterval is taken to be zero.
\section{Basic facts}\label{HALDIASANGEETAsec2}
In this section we introduce the basic terminologies required for our work. We also state the intermediate propositions corresponding to the main
steps of our argument. For a more extensive treatment, the reader may consult \cite{B1,B2,PRM}.
\subsection{IFS for fractal functions}\label{HALDIASANGEETAsubsec2a}
For $r\in \mathbb{N}$, let  $\mathbb{N}_r$ denote the subset $\{1,2,\dots, r\}$ of $\mathbb{N}$. Let a set of data points $\mathcal{D}=\{(x_i, y_i) \in \mathbb{R}^2: i \in \mathbb{N}_N\}$ satisfying $ x_1<x_2<\dots<x_N$, $N>2$, be given. Set $I = [x_1,x_N]$, $I_i = [x_i,x_{i+1}]$ for $i \in \mathbb{N}_{N-1}$. Suppose $L_i: I \rightarrow I_i$, $i \in \mathbb{N}_{N-1}$ be contraction homeomorphisms such that
\begin{equation}\label{HALDIASANGEETAeq1}
L_i(x_1)=x_i,\ L_i(x_N)=x_{i+1}.
\end{equation}
For instance, if $L_i(x) = a_i x+ b_i$, then the prescriptions in
(\ref{HALDIASANGEETAeq1}) yield
\begin{equation}\label{HALDIASANGEETAeq2}
a_i=\frac{x_{i+1}-x_{i}}{x_N-x_1},~~b_i=\frac{x_N
x_i-x_1x_{i+1}}{x_N-x_1},~~ i\in \mathbb{N}_{N-1}.
\end{equation}
Let $0<r_i<1, i\in \mathbb{N}_{N-1}$, and $X:=I \times \mathbb{R}$. Let $N-1$ continuous mappings
 $F_i: X \to \mathbb{R}$ be given satisfying:
 \begin{equation}\label{HALDIASANGEETAeq3}
F_i(x_1,y_1)=y_i,\ \ F_i(x_N,y_N)=y_{i+1},~~ \vert
F_i(x,y)-F_i(x,y^*)\vert \leq r_i \vert
 y-y^*\vert,
\end{equation}
where $(x,y), (x,y^*)\in X$. Define $w_i: X \to I_i \times \mathbb{R}\subseteq X$,\ ~$w_i(x,y)=\big(L_i(x),F_i(x,y)\big)$
$\forall~i \in \mathbb{N}_{N-1} $. It is known \cite{B1} that there exists a metric on $\mathbb{R}^2$, equivalent to the Euclidean metric, with respect to which $w_i, i\in \mathbb{N}_{N-1}$, are contractions. The collection $\mathcal{I}=\{X; w_i,i \in \mathbb{N}_{N-1}\}$ is called an IFS. \noindent Associated with the IFS $\mathcal{I}$, there is a set
valued Hutchinson map $W:H(X) \rightarrow H(X)$ defined by $W(B)=\underset{i=1}  {\overset{N-1} \cup}w_i(B)$ for  $B\in H(X)$, where $H(X)$ is the set of all nonempty compact subsets of $X$ endowed with the Hausdorff metric $h_d$. The Hausdorff distance between A and B in $\textit{H(X)}$ is defined by
$h_d(A,B)=\max
\big\{\underset{a\in A} \max \;\underset{b \in B} \min \;d(a,b),
\underset{b\in B} \max\; \underset{a \in A} \min \;d(b,a)\big\}.$ The complete metric space $\textit({H(X)},h_d)$ is called the \textit{space of fractals}. Further, $W$ is a contraction map on the complete metric space $(H(X), h_d)$. By the Banach Fixed Point Theorem, there exists a unique set $G\in H(X)$ such that $W(G) = G$. This set $G$ is called the
attractor or deterministic fractal corresponding to the IFS $\mathcal{I}$. For any choices of $L_i$ and $F_i$
satisfying the conditions prescribed in (\ref{HALDIASANGEETAeq1})-(\ref{HALDIASANGEETAeq3}) , the following result holds.
\begin{proposition}\label{HALDIASANGEETAprop1}(Barnsley \cite{B1})
The IFS $\{X; w_i; i\in \mathbb{N}_{N-1}\}$ defined above admits a unique
attractor $G,$ and $G$ is the graph of a continuous function
$g:I\to \mathbb{R}$ which obeys $g(x_i)=y_i$\;for $i \in \mathbb{N}_N$.
\end{proposition}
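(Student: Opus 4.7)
The plan is to follow Barnsley's classical strategy: construct the interpolant first as a fixed point of a functional operator, then identify its graph with the IFS attractor. First I would introduce the space
$\mathcal{G}=\{g\in \mathcal{C}(I,\mathbb{R}):g(x_1)=y_1,\; g(x_N)=y_N\}$ equipped with the uniform metric $d_\infty(g,h)=\max_{x\in I}|g(x)-h(x)|$. Since uniform limits of continuous functions with prescribed endpoint values lie in $\mathcal{G}$, this is a complete metric space, which is the natural setting for a Banach fixed point argument.

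Next I would define the Read--Bajraktarevi\'c operator $T:\mathcal{G}\to\mathcal{G}$ by $(Tg)(x)=F_i\bigl(L_i^{-1}(x),g(L_i^{-1}(x))\bigr)$ for $x\in I_i$, $i\in\mathbb{N}_{N-1}$. The first step is to show $T$ is well-defined, i.e.\ $Tg\in\mathcal{G}$. Continuity on each open subinterval is automatic since $L_i^{-1}$, $g$, and $F_i$ are all continuous; the only delicate point is the matching of one-sided limits at the interior knots $x_i$, $i=2,\dots,N-1$. This is precisely what the join conditions in \eqref{HALDIASANGEETAeq3} guarantee: at a knot $x_i$, the value computed from the left via $w_{i-1}$ gives $F_{i-1}(x_N,g(x_N))=F_{i-1}(x_N,y_N)=y_i$, while from the right via $w_i$ it gives $F_i(x_1,g(x_1))=F_i(x_1,y_1)=y_i$. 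The endpoint conditions $(Tg)(x_1)=y_1$ and $(Tg)(x_N)=y_N$ follow similarly.

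The contraction property is then immediate from the Lipschitz hypothesis on the $F_i$: for $g,h\in\mathcal{G}$ and $x\in I_i$,
\begin{equation*}
|(Tg)(x)-(Th)(x)|=|F_i(L_i^{-1}(x),g(L_i^{-1}(x)))-F_i(L_i^{-1}(x),h(L_i^{-1}(x)))|\le r_i\,d_\infty(g,h),
\end{equation*}
so $d_\infty(Tg,Th)\le (\max_i r_i)\,d_\infty(g,h)$ with $\max_i r_i<1$. The Banach Fixed Point Theorem then yields a unique $g\in\mathcal{G}$ with $Tg=g$, and the functional equation $g(L_i(x))=F_i(x,g(x))$ immediately gives $g(x_i)=y_i$ for each $i\in\mathbb{N}_N$ by setting $x=x_1$ and $x=x_N$.

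The remaining task, and in my view the main conceptual step, is to identify $\mathrm{graph}(g)$ with the IFS attractor $G$. I would show directly that $\mathrm{graph}(g)$ is a nonempty compact subset of $X$ satisfying $W(\mathrm{graph}(g))=\mathrm{graph}(g)$, since
\begin{equation*}
w_i(\mathrm{graph}(g))=\bigl\{(L_i(x),F_i(x,g(x))):x\in I\bigr\}=\bigl\{(L_i(x),g(L_i(x))):x\in I\bigr\}=\mathrm{graph}(g|_{I_i}),
\end{equation*}
and the union over $i\in\mathbb{N}_{N-1}$ recovers $\mathrm{graph}(g)$. By uniqueness of the attractor (Banach's theorem applied to $W$ on $(H(X),h_d)$), $G=\mathrm{graph}(g)$. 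The only subtlety here is verifying compactness of $\mathrm{graph}(g)$, which follows from continuity of $g$ on the compact interval $I$.
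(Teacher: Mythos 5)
Your proposal is correct and follows the same route the paper itself indicates: the paper states this result citing Barnsley and immediately afterwards sketches exactly your argument, namely the Read--Bajraktarevi\'c operator $T$ on the complete metric space $\mathcal{G}$ with contraction factor $r^*=\max_i r_i<1$, whose fixed point is the FIF, together with the identification of its graph with the unique attractor of $W$ on $(H(X),h_d)$. Your handling of the delicate points (matching of one-sided limits at interior knots via the join conditions in \eqref{HALDIASANGEETAeq3}, and compactness of the graph for the attractor identification) is accurate, so there is nothing to add.
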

\begin{definition}
The aforementioned function $g$  whose graph is the attractor of an IFS is called a \textbf{fractal interpolation function} (FIF) or a \textbf{self-referential function} corresponding to the IFS $\{X;\ w_i;i\in \mathbb{N}_{N-1}\}.$
\end{definition}
The above FIF $g$ is obtained as the fixed point of the Read-Bajraktarevi\'{c} (RB) operator $T$ defined on a complete metric space $(\mathcal{G}, \rho)$:
\begin{equation*}
(Th^*) (x)=  F_i\left(L_i^{-1}(x), h^*\circ L_i^{-1}(x)\right)\;
\forall~ x\in I_i ,\; i\in \mathbb{N}_{N-1}.
\end{equation*}
where $\mathcal{G} :=\{h^*: I \rightarrow \mathbb{R}: h^*~ \text{is continuous on}~ I,~ h^*(x_1)=y_1, h^*(x_N)=y_N\}$ is equipped with the uniform metric.
It can be seen that $T$ is a contraction mapping on $(\mathcal{G}, \rho)$ with a contraction factor $r^*:= \max\{r_i: i \in \mathbb{N}_{N-1}\}<1$. The fixed point of $T$ is the FIF $g$ corresponding to the IFS $\mathcal{I}$. Therefore, $g$ satisfies the functional equation:
\begin{equation}\label{HALDIASANGEETAeq4}
g(x) = F_i\left(L_i^{-1}(x),g \circ L_i^{-1}(x)\right),\; x\in
I_i,\; i\in \mathbb{N}_{N-1},
\end{equation}
The most extensively studied FIFs so far in the literature stem from the IFS
\begin{equation}\label{HALDIASANGEETAeq5}
\mathcal{I}=\{X; w_i(x,y)\equiv (L_i(x)=a_ix+b_i,~  F_i(x,y)=\alpha_i y + q_i(x));i \in \mathbb{N}_{N-1}\}.
\end{equation}
Here $q_i : I \to \mathbb{R}$ are suitable continuous functions, generally polynomials, satisfying (\ref{HALDIASANGEETAeq3}). If $q_i$ are polynomials, then the IFS is referred to as a polynomial IFS and the corresponding FIF is termed a polynomial FIF. Similarly, if $q_i$  are rational functions, then we call the corresponding FIF as a rational FIF. The parameter $-1< \alpha_i<1$
is called a scaling factor of the transformation $w_i$, and $\alpha = (\alpha_1, \alpha_2, \dots, \alpha_{N-1})$ is the scale vector corresponding to the IFS. The scaling factors provide flexibility in the choice of an interpolant, in contrast to the uniqueness of the interpolant in the traditional methods, and also determine the dimension of the interpolant. To get a rational  FIF with $\mathcal{C}^{p}$-continuity, we need the following proposition appeared in \cite{CVN}. This can be obtained by using Barnsley-Harrington theorem \cite{B2}.
\begin{proposition}\label{HALDIASANGEETAprop2}
Let $\{(x_i, y_i): i \in \mathbb{N}_N\}$ be a given interpolation data with strictly increasing abscissae. Consider the IFS $\mathcal{I}$, where  $L_i(x) = a_i x + b_i$ satisfies (\ref{HALDIASANGEETAeq1}) and
$F_i(x,y)=\alpha_i y + q_i(x),~ q_i(x)=\frac{P_i(x)}{Q_i(x)}$, $P_i(x), Q_i(x)$
are suitably chosen polynomials in $x$ of degree $M, N$ respectively, and
$Q_i(x) \neq 0 $ for every $ x\in [x_1, x_N]$. Suppose for some integer $p \ge 0, |\alpha_i| < a_i^p , i \in \mathbb{N}_{N-1}$. Let
$F_{i,m}(x, y) = \frac{\alpha_i y + q^{(m)}_i(x)}{a_i^m},
q^{(m)}_i$  represents the  $m^{th}$  derivative of $q_i(x)$
with respect to $x$,
\begin{eqnarray*}
y_{1,m} = \frac{q_1^{(m)}(x_1)}{a_1^m - \alpha_1}, \hspace{0.3cm} y_{N,m} =
\frac{q_{N-1}^{(m)}(x_N)}{a_{N-1}^m -
\alpha_{N-1}}, \; m =  1, 2, \dots, p.
\end{eqnarray*}
If $F_{i-1,m}(x_N, y_{N,m})=F_{i,m}(x_1,y_{1,m})$  for $\ i =2,3,
\dots,N-1$ and  $\ m =1,2,\dots,p$, then the IFS $\big\{X;\big(L_i(x),F_i(x,y)\big);i\in \mathbb{N}_{N-1}\big\}$ determines a
rational FIF ${\Psi}\in \mathcal{C}^{p}[x_1,x_N]$, and $\Psi^{(m)}$ is the rational FIF determined by the IFS $\big\{X;\big(L_i(x),F_{i,m}(x,y)\big);i\in \mathbb{N}_{N-1}\big\}$ for $ m =
1,2,\dots,p$.
\end{proposition}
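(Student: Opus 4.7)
The plan is to invoke the Barnsley--Harrington theorem, which characterizes when a polynomial/rational FIF is $\mathcal{C}^{p}$ and identifies its derivatives as FIFs of auxiliary IFSs. The entire argument is driven by formally differentiating the self-referential functional equation $m$ times and then exhibiting each candidate derivative as the fixed point of an appropriate Read--Bajraktarevi\'c operator whose IFS satisfies the hypotheses of Proposition~\ref{HALDIASANGEETAprop1}.

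First, I would start from the functional equation~(\ref{HALDIASANGEETAeq4}), which in the present setting reads $\Psi(L_i(x))=\alpha_i \Psi(x)+q_i(x)$ for $x\in I$, $i\in\mathbb{N}_{N-1}$. Since $L_i$ is affine with slope $a_i$, differentiating $m$ times (formally) gives $a_i^{m}\Psi^{(m)}(L_i(x))=\alpha_i \Psi^{(m)}(x)+q_i^{(m)}(x)$, i.e.\ $\Psi^{(m)}(L_i(x))=F_{i,m}(x,\Psi^{(m)}(x))$. This is precisely the fixed-point equation of the IFS $\{X;(L_i,F_{i,m})\}$. The vertical Lipschitz constant of $F_{i,m}$ in $y$ is $|\alpha_i|/a_i^{m}$, which, thanks to the hypothesis $|\alpha_i|<a_i^{p}$, is strictly less than $1$ for every $m=1,\dots ,p$; thus the contractivity requirement built into Proposition~\ref{HALDIASANGEETAprop1} is met.

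Next, I would verify the endpoint interpolation data for each derivative. If $\Psi^{(m)}$ is going to be an FIF of the IFS $\{X;(L_i,F_{i,m})\}$ with ordinates $\{y_{i,m}:i\in\mathbb{N}_{N}\}$, then consistency forces $F_{1,m}(x_1,y_{1,m})=y_{1,m}$ and $F_{N-1,m}(x_N,y_{N,m})=y_{N,m}$. Solving these two fixed-point relations yields exactly the expressions $y_{1,m}=q_1^{(m)}(x_1)/(a_1^{m}-\alpha_1)$ and $y_{N,m}=q_{N-1}^{(m)}(x_N)/(a_{N-1}^{m}-\alpha_{N-1})$ as stated. For the interior nodes $x_i$, $i=2,\dots,N-1$, continuity of $\Psi^{(m)}$ at $x_i$ is equivalent to demanding that the left-hand limit (supplied by the $(i-1)$-th map at $x_N$) coincides with the right-hand limit (supplied by the $i$-th map at $x_1$); this is precisely the hypothesis $F_{i-1,m}(x_N,y_{N,m})=F_{i,m}(x_1,y_{1,m})$. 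With all join conditions thus verified, Proposition~\ref{HALDIASANGEETAprop1} produces a continuous function $\psi_m:I\to\mathbb{R}$ whose graph is the attractor of $\{X;(L_i,F_{i,m})\}$ and which satisfies $\psi_m(L_i(x))=F_{i,m}(x,\psi_m(x))$.

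Finally, I would close the loop by showing $\psi_m=\Psi^{(m)}$ for $m=1,\dots ,p$. The natural route is induction on $m$: assuming $\Psi$ is $\mathcal{C}^{m-1}$ with $\Psi^{(m-1)}=\psi_{m-1}$, integrate the relation $\psi_m(L_i(x))=F_{i,m}(x,\psi_m(x))$ against $L_i$ to recover the functional equation of $\psi_{m-1}$ up to an additive constant; the endpoint values $y_{1,m}$, $y_{N,m}$ and the interior matching conditions pin down this constant so that $\int \psi_m$ matches $\psi_{m-1}$, giving differentiability of $\psi_{m-1}$ with derivative $\psi_m$. The \emph{main obstacle} is precisely this step --- promoting the formal differentiation into a rigorous statement that the continuous fixed point $\psi_m$ of the $m$-th derived IFS really is the classical derivative of $\psi_{m-1}$, and doing it uniformly on each subinterval $I_i$ so that the interior join conditions ensure global $\mathcal{C}^{p}$ regularity rather than only piecewise smoothness. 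Once this is achieved, the conclusion $\Psi\in\mathcal{C}^{p}[x_1,x_N]$, together with the identification of $\Psi^{(m)}$ as the FIF of $\{X;(L_i,F_{i,m})\}$, follows at once.
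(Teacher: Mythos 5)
Your proposal is correct and is essentially the intended argument: the paper gives no proof of this proposition, merely noting that it follows from the Barnsley--Harrington theorem of \cite{B2} (as worked out in \cite{CVN}), and your plan --- formal differentiation of the self-referential equation, checking the contractivity $|\alpha_i|/a_i^m<1$ from $|\alpha_i|<a_i^p$ and $a_i<1$, deriving the endpoint ordinates and join conditions, and then the integrate-and-invoke-uniqueness step to identify the fixed point of the derived IFS with the actual derivative --- is precisely that standard proof adapted to the rational $q_i$. The one step you flag as the main obstacle is handled exactly as you suggest: define $g(x)=y_{1,m-1}+\int_{x_1}^{x}\psi_m(t)\,dt$, verify that $g(L_i(x))-F_{i,m-1}(x,g(x))$ is constant in $x$ and vanishes at $x_1$ by the matching conditions, and conclude $g=\psi_{m-1}$ by uniqueness of the fixed point of the Read--Bajraktarevi\'{c} operator.
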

This completes our preparations for the current study, and we are now ready for our main section.
\section{$\mathcal{C}^1$-Rational cubic spline FIF with two-families of shape parameters}\label{HALDIASANGEETAsec3}
Let $\{(x_i,y_i): i\in \mathbb{N}_N\}$, $x_1<x_2<\dots<x_N$, be a
given set of data points. Let $y_i$ and $d_i$ denote the function value and the derivative value at the knot $x_i$, $i\in \mathbb{N}_N$ respectively.
The desired rational cubic spline  FIF with two families of shape
parameters can be obtained by the IFS given in (\ref{HALDIASANGEETAeq5}) with
$$q_i(x)\equiv
q_i^*(\theta)=\frac{U_i (1-\theta)^3 + V_i (1-\theta)^2  \theta+ W_i (1-\theta) \theta^2  + Z_i \theta^3 }{u_i +v_i \theta(1-\theta)},\:
\theta = \frac{x-x_1}{x_N-x_1},\: x \in I.$$
With this special choice of $q_i(x)$, suppose the contraction map  $T$ has a unique fixed point $\Psi \in \mathcal{G}$,  which satisfies:
\begin{equation}\label{HALDIASANGEETAeq6}
\begin{split}
\Psi\big(L_i(x)\big) &= F_i\big(x, \Psi(x)\big)= \alpha_i \Psi(x)+q_i(x),\\
 &= \alpha_i \Psi(x) + \frac{U_i (1-\theta)^3 + V_i (1-\theta)^2  \theta+ W_i (1-\theta) \theta^2  + Z_i \theta^3 }{u_i +v_i \theta(1-\theta)}.
\end{split}
\end{equation}
The conditions $F_i(x_1,y_1)=y_i$, $F_i(x_N,y_N)=y_{i+1}$ can be
reformulated as the interpolation conditions $\Psi(x_i)=y_i$,
$\Psi(x_{i+1})=y_{i+1}$, $ i\in \mathbb{N}_{N-1}$. The interpolatory conditions determine the coefficients $U_i$ and $Z_i$ as follows.
Substituting $x = x_1$ in (\ref{HALDIASANGEETAeq6}) we get
\begin{equation*}
 \Psi\big(L_i(x_1)\big)  = \alpha_i  \Psi(x_1) + U_i
\implies  y_i  = \alpha_i y_1 + \frac{U_i}{u_i}  \implies  U_i
=u_i(y_i-\alpha_i y_1).
\end{equation*}
Similarly, taking $x = x_N$ in (\ref{HALDIASANGEETAeq6}) we obtain $ Z_i  =
u_i(y_{i+1}-\alpha_i y_N)$. \\Now we make $ \Psi\in \mathcal{C}^1(I)$  by imposing the conditions prescribed in Proposition \ref{HALDIASANGEETAprop2}.\\
By hypothesis, $|\alpha_i | \le \kappa a_i$, $ i\in \mathbb{N}_{N-1}$, where $0\le \kappa < 1$. We also have $q_i \in \mathcal{C}^{1}(I)$. Adhering to the notation of Proposition \ref{HALDIASANGEETAprop2}, for $ i\in \mathbb{N}_{N-1}$, we let:\\
\begin{equation*}
F_{i,1}(x, y)= \frac{\alpha_i  y+ q_i^{(1)}(x)}{a_i},
y_{1,1}=d_1,\; y_{N,1}=d_N,\; F_{i,1}(x_1, d_1)=d_i,\;
F_{i,1}(x_N,d_N)=d_{i+1}.
\end{equation*}
Then by Proposition \ref{HALDIASANGEETAprop2}, the FIF $\Psi \in
\mathcal{C}^1(I)$. Further, $\Psi^{(1)}$ is the fractal function
determined by the IFS $\mathcal {I}^*\equiv\big\{\mathbb{R}^2;\big(L_i(x),
F_{i,1}(x,y)\big); i \in \mathbb{N}_{N-1}\big\}$. Consider $\mathcal{G^*} := \{h^*\in \mathcal{C}(I): h^*(x_1)=d_1~\text{and}~h^*(x_N)=d_N\}$ endowed
with the uniform metric. The IFS $\mathcal{I}^*$ induces a
contraction map $T^*: \mathcal{G^*} \rightarrow \mathcal{G^*}$ defined
by $(T^*g^*)\big(L_i(x)\big)=F_{i,1}\big(x, g^*(x)\big),\; x \in I.$
The fixed point of $T^*$ is $\Psi^{(1)}$. Consequently, $\Psi^{(1)}$
satisfies the functional equation:
\begin{equation}\label{HALDIASANGEETAeq7}
 \Psi^{(1)}\big(L_i(x)\big) = F_{i,1}\big(x, \Psi^{(1)}(x)\big)
 = \frac{\alpha_i  \Psi^{(1)}(x) + q_i^{(1)}(x)}{a_i}.
\end{equation}
The  conditions $F_{i,1}(x_1,d_1)=d_i$ and
$F_{i,1}(x_N,d_N)=d_{i+1}$ can be reformulated as the
interpolation conditions for the derivative:
$\Psi^{(1)}(x_i)=d_i$ and $\Psi^{(1)}(x_{i+1})=d_{i+1}$, $ i\in \mathbb{N}_{N-1}$.
Applying $x = x_1$ in (\ref{HALDIASANGEETAeq7}) we obtain
\begin{equation*}
\begin{split}
 \Psi^{(1)}(L_i(x_1)) &  = \frac{\alpha_i}{a_i}  \Psi^{(1)}(x_1) + \frac{u_iV_i - (3u_i+v_i) U_i}{u_i^2 h_i}\\
\implies   V_i & =  (3u_i+v_i)(y_i-\alpha_iy_1)+u_i h_id_i-\alpha_i u_i (x_N-x_1) d_1.
\end{split}
\end{equation*}
Similarly, the substitution  $x = x_N$ in (\ref{HALDIASANGEETAeq7}) yields
\begin{center}
$W_i  = (3u_i+v_i)(y_{i+1}-\alpha_iy_N)-u_i h_id_{i+1}+\alpha_i u_i (x_N-x_1) d_N$.
\end{center}
These values of $U_i, V_i, W_i$, and $Z_i$ reformulate the  desired $\mathcal{C}^1$-rational cubic spline FIF (\ref{HALDIASANGEETAeq6}) to the following:
\begin{equation}\label{HALDIASANGEETAeq8}
 \Psi\big(L_i(x)\big) = \alpha_i \Psi(x) + \frac{P_i(x)} {Q_i(x)},
\end{equation}
$P_i(x)\equiv P_i^* (\theta) =   u_i(y_i-\alpha_i y_1) (1-\theta)^3 + \{(3u_i+v_i)(y_i-\alpha_iy_1)+u_i h_id_i-\alpha_i u_i (x_N-x_1) d_1\} (1-\theta)^2  \theta+
 \{(3u_i+v_i)(y_{i+1}-\alpha_iy_N)-u_i h_id_{i+1}+\alpha_i u_i (x_N-x_1) d_N\} (1-\theta) \theta^2  + u_i(y_{i+1}-\alpha_i y_N) \theta^3,$~~~
$Q_i(x)\equiv Q_i^*(\theta) = u_i +v_i \theta(1-\theta),~ \theta = \frac{x-x_1}{x_N - x_1} $.\\\\
Since the FIF $\Psi$ in (\ref{HALDIASANGEETAeq8}) is derived as a solution of the
fixed point equation $Tg = g$, it is unique for a fixed choice of
the scaling factors and the shape parameters.
\begin{remark}\label{HALDIASANGEETArem1}(Interval tension property) Let $ \triangle_i = \dfrac{y_{i+1} - y_i}{h_i}$.  (\ref{HALDIASANGEETAeq8}) can be expressed as:
\begin{eqnarray}\label{HALDIASANGEETAeq8a}
\Psi(L_i(x)) = &\alpha_i \Psi(x) + (y_i -\alpha_i y_1) (1-\theta)+
(y_{i+1}-\alpha_i y_N)\theta \\& +  \dfrac{u_ih_i \theta (1-\theta)\big [(2\theta-1)\triangle_i^*
+ (1-\theta)d^*_i-\theta d^*_{i+1}\big ]}{Q_i(\theta)} \nonumber,
\end{eqnarray}
where $d^*_i = d_i -  \frac {\alpha_i d_1}{a_i},~
d^*_{i+1} = d_{i+1} -  \frac {\alpha_i d_N}{a_i},~
\triangle^*_i = \triangle_i - \alpha_i \frac{y_N -y_1}{h_i}$.
When $v_i \rightarrow \infty$ in (\ref{HALDIASANGEETAeq8a}),
$\Psi$ converges to the following affine FIF :
\begin{equation}\label{HALDIASANGEETAeq8b}
\Psi(L_i(x)) = \alpha_i \Psi(x) + (y_i -\alpha_i y_1) (1-\theta)
+ (y_{i+1}-\alpha_i y_N)\theta.
\end{equation}
Again if $\alpha_i \rightarrow 0^+$ with $v_i\rightarrow \infty,$
then the rational cubic FIF modifies to  the classical affine interpolant. Hence,
the shape parameter $v_i$ has a vital influence on the graphical
display of data while $u_i$ can assume any positive value. The increase in
the value of parameter $v_i$ in $[x_i,x_{i+1}]$ transforms the rational cubic
functions to the straight line $y_i(1-\theta)+y_{i+1}\theta$.
\end{remark}
\begin{remark}\label{HALDIASANGEETArem2} If $\alpha_i =0$, $ i\in \mathbb{N}_{N-1},$ then the resulting  rational Cubic FIF coincides with the piecewise defined nonrecursive classical  rational cubic interpolant $C$ as
\begin{equation}\label{HALDIASANGEETAeq9}
\Psi(L_i(x)) =\frac{P_i^* (\theta)}{Q_i^* (\theta)},
\end{equation}
where
$P_i^* (\theta) =  u_i y_i(1-\theta)^3+[(3u_i+v_i)y_i+u_i h_i d_i]
(1-\theta)^2 \theta+[(3u_i+v_i)y_{i+1}-u_i h_i d_{i+1}](1-\theta) \theta^2+u_i y_{i+1}\theta^3$,~
$Q_i^*(\theta) = u_i +v_i \theta(1-\theta)$.
Since $\frac {L_i^{-1}(x) -x_1}{x_N-x_1}= \frac{x-x_i}{h_i}=\upsilon$ (say), from (\ref{HALDIASANGEETAeq9}), for $x \in I_i=[x_i, x_{i+1}]$, we have
\begin{equation}\label{HALDIASANGEETAeq10}
\Psi(x) =  \frac{P_i^*(\upsilon)}{Q_i^*(\upsilon)} \equiv C_i(x)~ (say).
\end{equation}
where  $\upsilon$ is a localized variable. The
rational cubic spline $C\in \mathcal{C}^1(I)$ is defined by
$C\big|_{I_i}=C_i$, $i \in \mathbb{N}_{N-1}$. This illustrates that if we let  $\alpha_i\rightarrow 0$, then the graph of our rational cubic FIF on $[x_i,x_{i+1}]$ approaches the graph of the classical rational cubic interpolant described by Sarfraz and Hussain \cite{SHH}.
\end{remark}
\begin{remark}\label{HALDIASANGEETArem3} It is interesting to note that when
$u_i = 1, v_i = 0 ~ \text{and}
~|\alpha_i|\le \kappa a_i $ for $ i\in \mathbb{N}_{N-1}$, $\kappa \in (0,1)$, in Eqn. (\ref{HALDIASANGEETAeq8}) then the resulting RCFIF coincides with the $\mathcal{C}^1$-cubic Hermite FIF \cite{CV2}. If we take $ u_i =1, v_i = 0 ~ \text{and}
~\alpha_i=0$, we obtain for $ x \in [x_i,x_{i+1}]$,\\
$\Psi(x) = (2 \theta^3-3\theta^2+1) y_i + (\theta^3-2\theta^2+\theta)h_i d_i+(-2 \theta^3+3\theta^2)y_{i+1}+(\theta^3-\theta^2)\cdot\\h_i d_{i+1}.$
Hence $ \Psi$ recovers the classical piecewise $\mathcal{C}^1$-cubic Hermite interpolant over $I$.
\end{remark}
\section{Parameter Identification of the Rational FIF}\label{HALDIASANGEETAsec4}
In this section, we take up the problem of identifying the parameters of the rational FIF so that the corresponding $\mathcal{C}^1$-RCSFIF enjoys certain desirable shape properties.  In section \ref{HALDIASANGEETAsubsec4a}, we derive sufficient condition on the parameters so that the RCSFIF generates positive curves for a given positive data set. In the next subsection \ref{HALDIASANGEETAsubsec4b},
we deal with a slightly general problem, viz., finding conditions on the IFS parameters for the containment of the graph of the corresponding FIF within a prescribed rectangle. We identify suitable
values of the parameters that render RCSFIFs with graphs lying above a prescribed
straight line in section \ref{HALDIASANGEETAsubsec4c}.
\subsection{Positivity Preserving $\mathcal{C}^1$-Rational Cubic Spline
FIF}\label{HALDIASANGEETAsubsec4a}
For an arbitrary selection of the scaling factors and shape parameters,
the RCSFIF  $\Psi$ described above may not be positive, if the
data set is positive. This is very similar to the ordinary spline schemes that do not provide the desired shape features of data. We have to restrict the scaling factors and shape parameters for a positive preserving rational cubic  FIF as described in the following theorem:
\begin{theorem}\label{RCFIFthm}
Let $\{(x_i ,y_i ,d_i),i \in \mathbb{N}_N \}$ be a given positive data. If
$(i)$ the scaling factor $\alpha_i$ is selected as\\
\begin{equation}\label{HALDIASANGEETAeq11}
\alpha_i \in [0,\min\{a_i ,\frac{y_i}{y_1} ,\frac{y_{i+1}}{y_N}\}),i \in \mathbb{N}_{N-1},\\
\end{equation}
$(ii)$ For $i \in \mathbb{N}_{N-1}$, the shape parameters $u_i,~ \text{and}
~v_i,$ are restricted as
\begin{equation}\label{HALDIASANGEETAeq12}
\begin{split}
&u_i > 0 ,\;\;\\
&v_i\geq \max \Big\{0,-u_i\big[3+\frac{h_i d_i-\alpha_i (x_N-x_1)d_1}{y_i -\alpha_i y_1}\big],
-u_i\big[3+\frac{(-h_i d_{i+1}+\alpha_i (x_N-x_1)d_N)}{y_{i+1} -\alpha_i y_N}\big]\Big\},\\
\end{split}
\end{equation}
then we obtain a positivity preserving $\mathcal{C}^1$-RCSFIF $\Psi$.
\end{theorem}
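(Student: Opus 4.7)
The plan is to invoke the Banach fixed point theorem on a closed positive cone. Let $\mathcal{G}^+ := \{h \in \mathcal{G} : h(x) \geq 0 \text{ for every } x \in I\}$, which is a closed subset of the complete metric space $(\mathcal{G}, \rho)$ and hence itself complete in the induced uniform metric. Since $T$ is a contraction on $\mathcal{G}$ whose unique fixed point is $\Psi$, it suffices to establish the invariance $T(\mathcal{G}^+)\subseteq \mathcal{G}^+$; iterating $T$ from any seed in $\mathcal{G}^+$ then produces a Cauchy sequence in $\mathcal{G}^+$, whose limit must coincide with $\Psi$ and so be nonnegative.

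To prove invariance, fix $h \in \mathcal{G}^+$ and $i \in \mathbb{N}_{N-1}$, and study
\begin{equation*}
(Th)(L_i(x)) \;=\; \alpha_i\, h(x) \;+\; \frac{P_i^*(\theta)}{Q_i^*(\theta)}, \qquad \theta = \tfrac{x - x_1}{x_N - x_1}.
\end{equation*}
Since the images $L_i(I) = I_i$ cover $I$, nonnegativity of this expression on $I$ for every $i$ delivers $Th \in \mathcal{G}^+$. Condition (i) forces $\alpha_i \geq 0$, and $h \geq 0$, so $\alpha_i h(x) \geq 0$. Moreover, $u_i > 0$ together with the fact that the lower bound in (ii) contains $0$ forces $v_i \geq 0$, so $Q_i^*(\theta) = u_i + v_i \theta(1-\theta) \geq u_i > 0$ on $[0,1]$. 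The remaining task is to show $P_i^*(\theta) \geq 0$ on $[0,1]$.

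The numerator $P_i^*(\theta)$ is expressed in the cubic Bernstein--like basis $(1-\theta)^3,\ (1-\theta)^2\theta,\ (1-\theta)\theta^2,\ \theta^3$ with coefficients $U_i, V_i, W_i, Z_i$. Since this basis is nonnegative on $[0,1]$, it suffices to show all four coefficients are nonnegative. From $u_i > 0$, positivity of the data, and the bounds $\alpha_i < y_i/y_1$ and $\alpha_i < y_{i+1}/y_N$ from (i), we immediately obtain $U_i, Z_i > 0$. A short algebraic rearrangement shows that $V_i \geq 0$ is equivalent, after dividing by the positive quantity $u_i(y_i - \alpha_i y_1)$, to
\begin{equation*}
v_i \;\geq\; -u_i\Bigl[3 + \frac{h_i d_i - \alpha_i (x_N-x_1) d_1}{y_i - \alpha_i y_1}\Bigr],
\end{equation*}
which is precisely the second entry inside the max in (ii); similarly $W_i \geq 0$ reduces to the third entry. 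Thus the hypotheses guarantee $P_i^*(\theta) \geq 0$ on $[0,1]$, closing the invariance argument.

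The procedure is essentially algebraic once the cone-invariance framework is chosen, and there is no genuine analytic obstacle. The main points worth watching are the strictness of $\alpha_i < y_i/y_1$ and $\alpha_i < y_{i+1}/y_N$, which keeps the denominators appearing in the $V_i, W_i$ conditions positive, and the separate requirement $|\alpha_i| < a_i$ from Proposition~\ref{HALDIASANGEETAprop2}, needed to ensure that $\Psi \in \mathcal{C}^1(I)$ and hence is well-defined by (\ref{HALDIASANGEETAeq8}) in the first place. One should also emphasize that the Bernstein-coefficient criterion used for the numerator is only sufficient, so the bounds in (i)--(ii) are sufficient rather than sharp; this is consistent with the statement of the theorem.
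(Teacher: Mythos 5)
Your proposal is correct and follows essentially the same route as the paper: positivity of $Q_i^*$ from $u_i>0$, $v_i\ge 0$, and nonnegativity of the Bernstein-type coefficients $U_i,V_i,W_i,Z_i$ of $P_i^*$, which is exactly what conditions (i) and (ii) encode. Your explicit invariant-cone argument (completeness of $\mathcal{G}^+$ and $T(\mathcal{G}^+)\subseteq\mathcal{G}^+$) is simply a rigorous rendering of the paper's closing remark that ``$\Psi$ is constructed iteratively,'' so the two proofs are substantively identical.
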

\begin{proof}
We have
$\Psi(L_i(x)) = \alpha_i\Psi(x)+  \frac{P_i^*(\theta)}{Q_i^*(\theta)}$,
$ \theta$ = $ \frac{x-x_1}{x_{N}-x_{1}}$, $ x \in I.$
If $\psi(x)\geq 0,$ it is easy to verify that for $i \in \mathbb{N}_{N-1}$ when $\alpha_i\geq0,i \in \mathbb{N}_{N-1}$,
the sufficient condition for $\Psi(L_i(x))>0$ for all $ x \in I$ is
$\frac{P_i^*(\theta)}{Q_i^*(\theta)} >0 \;\;\forall\;\; \theta \in [0,1].$
So the initial condition on the scaling factors are $\alpha_i\geq0, i\in \mathbb{N}_{N-1}$.
Since $ Q_i^*(\theta)>0\;\;\forall\;\;\theta \in [0,1]$, $u_i>0$, and $v_i>0$, $ i \in \mathbb{N}_{N-1} $, the positivity of RCSFIF
$\frac{P_i^*(\theta)}{Q_i^*(\theta)} $ depends on the positivity of $ P_i(\theta)$.
 For our convenience, we write  $ P_i^*(\theta)$ as
$P_i^*(\theta) = U_i (1-\theta)^3 + V_i (1-\theta)^2  \theta + W_i (1-\theta) \theta^2 + Z_i \theta^3.$ Now $P_i^*(\theta) > 0$ if $ U_i>0 , V_i>0 ,W_i>0 , Z_i>0$ (cf. Section 3).
Now $ U_i>0, Z_i>0$ if (\ref{HALDIASANGEETAeq11}) is true. Also $V_i>0, W_i>0$ when (\ref{HALDIASANGEETAeq12}) is valid.
Since $\Psi(x)$ is constructed iteratively
$\Psi(L_i(.))\geq 0\;\;\forall\;\;\psi(.)\geq 0$ implies $\Psi(x)\geq0;\;\forall\;\; x\in I.$
\end{proof}
\noindent
\textbf{Consequence.} Setting $\alpha_i=0$  in (\ref{HALDIASANGEETAeq12}), we obtain
$u_i > 0$,
$v_i\geq \max \{0,-u_i[3+\frac{h_i d_i}{y_i}],
-u_i[3+\frac{(-h_i d_{i+1})}{y_{i+1}}]\},$
that provide a set of sufficient conditions for the positivity of
rational cubic spline $C$. It is worthwhile to mention that these
conditions on shape parameters are weaker than those obtained by Sarfraz et.al.  \cite{SHH}.
\subsection{Containment of  Graph of  FIF in a Rectangle}\label{HALDIASANGEETAsubsec4b}
Given a data $\{(x_i, y_i): i\in \mathbb{N}_N\}$. We wish to
find conditions on the parameters of the rational IFS so that the
graph $G$ of the corresponding FIF $\Psi$ lies within the prescribed
rectangle $K= I \times [c,d]$, where $c< \min\{y_i:
i\in \mathbb{N}_N\}$ and $d> \max\{y_i: i\in \mathbb{N}_N\}$. Here
graph $G$ is the attractor of the IFS
$\mathcal{I}=\Big\{\mathbb{R}^2;\big(L_i(x), F_i(x,y)=\alpha_i y+ q_i(x)\big);
i\in \mathbb{N}_{N-1}\Big\}$.  The FIF $\Psi$ is obtained by iterating the IFS
$\mathcal{I}$ and $I=[x_1, x_N]$ is the attractor of the IFS $\{I;
L_i(x); i\in \mathbb{N}_{N-1}\}$. Hence, for $G$ to lie within $K$, it suffices
to prove that $ \alpha_i y+ q_i(x) \in [c,d]~ \forall~ (x,y) \in
K$.\\
\noindent\textbf{Case-I} As in the  positivity preserving interpolation discussed
earlier, firstly we assume $ 0 \le \alpha_i <a_i$ for all $i\in
\mathbb{N}_{N-1}$. Let $(x,y) \in K$. Then, with our assumption on $\alpha_i$,
we have $\alpha_i c \le \alpha_i y \le \alpha_i d $. This implies
$\alpha_i c+ q_i(x) \le \alpha_i y+ q_i(x) \le \alpha_i d +
q_i(x)$. Consequently, for $G$ to lie within $K$, it suffices to
have the following conditions for all $i\in \mathbb{N}_{N-1}:$
\begin{equation}\label{HALDIASANGEETAeq13}
c \le \alpha_i c + q_i(x),
\end{equation}
\begin{equation}\label{HALDIASANGEETAeq14}
 \alpha_i d + q_i(x) \le d.
\end{equation}
Now, (\ref{HALDIASANGEETAeq13}) holds if
\begin{equation}\label{HALDIASANGEETAeq15}
c(1-\alpha_i) \le \frac{U_i (1-\theta)^3+ V_i
\theta(1-\theta)^2+ W_i \theta^2 (1-\theta) + Z_i
\theta^3}{u_i +v_i \theta(1-\theta)},
\end{equation}
where the constants $U_i , V_i, W_i$, and $ Z_i$ are
given in (\ref{HALDIASANGEETAeq8}). The expression in the denominator of $q_i(x)$ can be written in the degree elevated form as following:
\begin{equation}\label{HALDIASANGEETAeq15a}
\begin{split}
u_i +v_i \theta(1-\theta) &\equiv u_i(1-\theta)^2+(2u_i+v_i)(1-\theta)\theta+u_i  \theta^2\\&  \equiv u_i (1-\theta)^3 + (3u_i+v_i)(1-\theta)^2 \theta + (3u_i+v_i) (1-\theta) \theta^2 +  u_i \theta^3.
\end{split}
\end{equation}
To keep symmetry of the expressions in the numerator and the denominator, cross multiplying and rearranging it can be observed that (\ref{HALDIASANGEETAeq15})
holds if
\begin{equation}\label{HALDIASANGEETAeq16}
\begin{split}
&\big[U_i -c (1-\alpha_i)u_i\big](1-\theta)^3 + \big[V_i-c
(1-\alpha_i)(3u_i+v_i)\big] \theta
(1-\theta)^2\\&+\big[ W_i-c (1-\alpha_i)(3u_i+v_i)\big]
\theta^2 (1-\theta)+ \big[ Z_i-c(1-\alpha_i) u_i\big] \theta^3
\ge 0.
\end{split}
\end{equation}
Now (\ref{HALDIASANGEETAeq16}) is satisfied if the following system of
inequalities are true:
\begin{equation*}\label{HALDIASANGEETAeq17}\left.
\begin{split}
U_i -c (1-\alpha_i)u_i & \ge 0,\;\; V_i-c
(1-\alpha_i)(3u_i+v_i) &\ge 0,\\ W_i-c (1-\alpha_i)(3u_i+v_i) &\ge 0,\;\;~~~~~~~~~~~~~ Z_i-c(1-\alpha_i) u_i
&\ge 0.
\end{split}\right\}
\end{equation*}
Since $u_i>0$, the selection of $\alpha_i$ satisfying
$$\alpha_i \le \dfrac{y_i-c}{y_1-c}\;\;  \text{and}\; \; \alpha_i \le
\dfrac{y_{i+1}-c}{y_N-c}$$ ensures $U_i -c (1-\alpha_i)\alpha_i \ge
0\;\; \text{and}\;\; Z_i-c(1-\alpha_i) \delta_i\ge 0$. Let
$\alpha_i < \min \big\{\dfrac{y_i-c}{y_1-c},
\dfrac{y_{i+1}-c}{y_N-c}\big\}$.\\ Then, the conditions $V_i-c
(1-\alpha_i)(3u_i+v_i)\ \ge 0,\; W_i-c (1-\alpha_i)(3u_i+v_i) \ge 0$ are met if
\begin{equation*}
\begin{split}
v_i & \ge v_{1i}:= -u_i\big[3+\frac{h_i d_i-\alpha_i (x_N-x_1)d_1}{y_i-c-\alpha_i(y_1-c)}\big],\\
v_i &\ge
v_{2i}:= -u_i\big[3+\frac{(-h_i d_{i+1}+\alpha_i (x_N-x_1)d_N)}{y_{i+1}-c-\alpha_i(y_N-c)}\big].
\end{split}
\end{equation*}
Hence, the following conditions are sufficient to verify
(\ref{HALDIASANGEETAeq13}):
\begin{equation*}\label{HALDIASANGEETAeq18}
0 \le \alpha_i  < \min\Big\{\frac{y_i-c}{y_1-c},
\frac{y_{i+1}-c}{y_N-c}\Big\},\;\; u_i\ge 0, \;\;  v_i \ge \max \{v_{1i},v_{2i}\}.
\end{equation*}
Similarly, the following conditions verify (\ref{HALDIASANGEETAeq14}):
\begin{equation*}\label{HALDIASANGEETAeq19}
\begin{split}
0 \le \alpha_i  &< \min\Big\{\frac{d-y_i}{d-y_1},
\frac{d-y_{i+1}}{d-y_N}\Big\},\\
v_i & \ge v_{3i}:= -u_i\big[3+\frac{(-h_i d_i+\alpha_i (x_N-x_1)d_1)}{d-y_i-\alpha_i(d-y_1)}\big],\\
v_i &\ge
v_{4i}:= -u_i\big[3+\frac{(h_i d_{i+1}-\alpha_i (x_N-x_1)d_N)}{d-y_{i+1}-\alpha_i(d-y_N)}\big].
\end{split}
\end{equation*}
\noindent\textbf{Case-II} We consider $-a_i< \alpha_i <0$. In this case (\ref{HALDIASANGEETAeq13}) and
(\ref{HALDIASANGEETAeq14}) will be replaced respectively by
\begin{equation}\label{HALDIASANGEETAeq20}
c \le \alpha_i d+ q_i(x) ~~\text{and}~~  \alpha_i c + q_i(x) \le
d.
\end{equation}
Using computations similar to the case of $\alpha_i \ge 0$, it can
be seen that  (\ref{HALDIASANGEETAeq20}) is true if
\begin{equation*}\label{HALDIASANGEETAeq21}
\begin{split}
&\alpha_i^{min}  >  \max \Big\{-a_i, \frac{y_i-c}{y_1-d},
\frac{y_{i+1}-c}{y_N-d}, \frac{d-y_i}{c-y_1}, \frac{d-y_{i+1}}{c- y_N} \Big\},\\
&v_i \ge \max \Big \{v_{5i}:=-u_i\big[3+\frac{(h_i d_i-\alpha_i (x_N-x_1)d_1)}{y_i-c-\alpha_i(y_1-d)}\big],v_{6i}:=-u_i\big[3+\frac{(-h_i d_{i+1}+\alpha_i (x_N-x_1)d_N)}{y_{i+1}-c-\alpha_i(y_N-d)}\big],\\&v_{7i}:=-u_i\big[3+\frac{(-h_i d_i+\alpha_i (x_N-x_1)d_1)}{d-y_i-\alpha_i(c-y_1)}\big],v_{8i}:=-u_i\big[3+\frac{(h_i d_{i+1}-\alpha_i (x_N-x_1)d_N)}{d-y_{i+1}-\alpha_i(c-y_N)}\big]\Big\}.
\end{split}
\end{equation*}
The following theorem contains the above discussion.
\begin{theorem}\label{r2pptm5}
Suppose a data set $\{(x_i,y_i):i\in \mathbb{N}_N\}$ is given, and
$\Psi$ is the corresponding $\mathcal{C}^1$-rational cubic spline FIF
described in (\ref{HALDIASANGEETAeq8}). Then the following conditions on the
scaling factors and the shape parameters in each subinterval are
sufficient for the  containment of the graph of $\Psi$ in the
rectangle $I= [x_1, x_N] \times [c,d]:$
\begin{equation*}
\begin{split}
\alpha_i ^{min} < \alpha_i  < \alpha_i ^{max},~u_i >0,~\text{and}~ v_i > \max \{0,v_{1i}, v_{2i},v_{3i},v_{4i},v_{5i},v_{6i},v_{7i},v_{8i}\},
\end{split}
\end{equation*}
where $\alpha_i ^{max}< \min\Big\{a_i,\frac{y_i-c}{y_1-c},
\frac{y_{i+1}-c}{y_N-c},\frac{d-y_i}{d-y_1},
\frac{d-y_{i+1}}{d-y_N}\Big\}.$
\end{theorem}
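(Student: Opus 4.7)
The plan is to exploit the attractor characterization of the FIF graph: since $G$ is the unique fixed point of the Hutchinson operator $W$ on $(H(X), h_d)$, it suffices to exhibit the rectangle $K = I \times [c,d]$ as a set invariant under $W$, i.e., to verify $w_i(K) \subseteq K$ for each $i \in \mathbb{N}_{N-1}$. Because $L_i(I)=I_i \subseteq I$, this invariance reduces to the scalar double inequality $c \le \alpha_i y + q_i(x) \le d$ for all $(x,y) \in K$. The Banach iterates $W^n(K)$ then form a decreasing sequence (in the Hausdorff sense) with limit $G$, forcing $G \subseteq K$.

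For Case I ($0 \le \alpha_i < a_i$), monotonicity of $y \mapsto \alpha_i y$ on $[c,d]$ splits the double inequality into (\ref{HALDIASANGEETAeq13}) and (\ref{HALDIASANGEETAeq14}). I would write $Q_i^*(\theta) = u_i + v_i\theta(1-\theta)$ in its cubic degree-elevated Bernstein form, as already recorded in (\ref{HALDIASANGEETAeq15a}), so that after cross-multiplication each inequality becomes the nonnegativity of a single cubic Bernstein polynomial on $[0,1]$. Since the cubic Bernstein basis is nonnegative, it is enough to demand that each of its four Bernstein coefficients be nonnegative. The coefficients of $(1-\theta)^3$ and $\theta^3$, being linear in $U_i$ and $Z_i$, translate (after substituting the closed-form values from Section \ref{HALDIASANGEETAsec3}) into the upper bounds $\alpha_i \le (y_i-c)/(y_1-c)$, $\alpha_i \le (y_{i+1}-c)/(y_N-c)$ and their analogues with $d$. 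The two middle coefficients, linear in $V_i$ and $W_i$, after solving for $v_i$, yield precisely $v_i \ge v_{1i}, v_{2i}$ from (\ref{HALDIASANGEETAeq13}) and $v_i \ge v_{3i}, v_{4i}$ from (\ref{HALDIASANGEETAeq14}).

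Case II ($-a_i < \alpha_i < 0$) is handled identically except that $y \mapsto \alpha_i y$ now reverses orientation, replacing (\ref{HALDIASANGEETAeq13})--(\ref{HALDIASANGEETAeq14}) by (\ref{HALDIASANGEETAeq20}); the same Bernstein-coefficient analysis then delivers the lower bounds on $\alpha_i$ involving $y_1-d,\ y_N-d,\ c-y_1,\ c-y_N$, together with the constraints $v_i \ge v_{5i}, v_{6i}, v_{7i}, v_{8i}$. Intersecting the feasibility regions from the two cases gives the two-sided bound $\alpha_i^{\min} < \alpha_i < \alpha_i^{\max}$ of the theorem, while the union of all lower bounds on $v_i$, together with the baseline requirement $v_i > 0$ needed to keep $Q_i^* > 0$, gives the stated condition on $v_i$; the condition $u_i > 0$ enters only through the strict positivity of $Q_i^*$ and the sign normalization when cross-multiplying.

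The main obstacle is the opening reduction, which conceptually rests on the IFS machinery rather than on calculation: one must be careful that local invariance $w_i(K) \subseteq K$ really certifies $G \subseteq K$, via the contraction of $W$ on $H(X)$ and Banach's fixed-point theorem applied on the complete subspace $H(K) \subseteq H(X)$. Once this step is secured, the rest is mechanical Bernstein--Bézier positivity analysis, and the only care required is the bookkeeping of eight quantities $v_{1i},\dots,v_{8i}$ and five $\alpha_i$-type bounds; no new idea beyond the Bernstein coefficient sufficient condition is needed.
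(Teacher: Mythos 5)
Your proposal is correct and follows essentially the same route as the paper: reduce containment of the attractor to invariance of the rectangle under each $w_i$, split into the cases $0\le\alpha_i<a_i$ and $-a_i<\alpha_i<0$, degree-elevate the denominator $Q_i^*$ as in (\ref{HALDIASANGEETAeq15a}), and impose nonnegativity of the four Bernstein coefficients to extract the bounds on $\alpha_i$ and $v_{1i},\dots,v_{8i}$. Your explicit justification of the invariance step via the nested iterates $W^n(K)$ is a slightly more careful rendering of what the paper only asserts, but it is the same argument.
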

\begin{remark}\label{r2pprm4b}
The positivity preserving interpolation discussed in Theorem 1 can be obtained as a special case of our present setting. The idea is to consider that the rectangle is enlarged to the semi-infinite strip $K= I \times (0, \infty)$.  In this case,
no condition need be imposed for bounding the graph from above.
Thus, we have to consider only (\ref{HALDIASANGEETAeq13}) with the special choice
$c=0$. Adhering to the above notation and discussion, we get the
desired conditions for the positivity as discussed
in Theorem 1.
\end{remark}
\subsection{Rational Cubic Spline FIF above the line}\label{HALDIASANGEETAsubsec4c}
\begin{theorem}\label{r2pptm5}
Suppose a data set $\{(x_i,y_i):i\in \mathbb{N}_N\}$ which lies above the line $t=mx+k$, that is $y_i>t_i$ for $i\in \mathbb{N}_N$. The graph of the corresponding RCSFIF $\Psi$ lies above the line $t=mx+k$ if the scaling factors $|\alpha_i|<a_i$ and the shape parameters $u_i >0, v_i >0$ further satisfy
\begin{equation*}
\begin{split}
0\le \alpha_i < \min \big\{\dfrac{y_i-t_i}{y_1-t_1},
\dfrac{y_{i+1}-t_{i+1}}{y_N-t_N}\big\} ,~u_i >0,~\text{and}~ v_i > \max \{0,v_{9i}, v_{10i}\},
\end{split}
\end{equation*}
where  $v_i \ge v_{9i}:= -u_i\big[\frac{2(y_i-t_i)+(y_i-t_{i+1})+h_i d_i
-\alpha_i \{2(y_1-t_1)+(y_1-t_N)+(x_N-x_1)d_1\}}{y_i-t_i-\alpha_i(y_1-t_1)}\big],\\
v_i \ge
v_{10i}:= -u_i\big[\frac{(y_{i+1}-t_i)+2(y_{i+1}-t_{i+1})-h_i d_{i+1}
-\alpha_i \{(y_N-t_1)+2(y_N-t_N)-(x_N-x_1)d_N\}}{y_{i+1}-t_{i+1}-\alpha_i(y_N-t_N)}\big]$.
\end{theorem}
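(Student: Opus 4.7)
The plan is to mirror the rectangle-containment argument of subsection \ref{HALDIASANGEETAsubsec4b}, but with the constant lower bound $c$ replaced by the affine function $t(x)=mx+k$. The starting observation is that on the interval $I$ the line can be written in the $\theta$-parameter as $t_1(1-\theta)+t_N\theta$, and on the subinterval $I_i$ as $t_i(1-\theta)+t_{i+1}\theta$, because an affine function interpolated at the two endpoints reproduces itself. Thus working with $\Psi$ above the line is structurally analogous to working with $\Psi$ above a constant bound.

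First I would reformulate the conclusion in terms of invariance of a closed subset of $\mathcal{G}$. Let $\mathcal{G}_t=\{h\in\mathcal{G}:h(x)\ge t(x)~\forall\,x\in I\}$, which is closed and nonempty (it contains the linear joining of $(x_1,y_1)$ and $(x_N,y_N)$ under the data hypothesis). Since the RB operator $T$ is a contraction on $\mathcal{G}$ whose unique fixed point is $\Psi$, it suffices to prove $T(\mathcal{G}_t)\subseteq\mathcal{G}_t$. Pick $h\in\mathcal{G}_t$ and write $(Th)(L_i(x))=\alpha_i h(x)+q_i(x)$. Using $h(x)\ge t_1(1-\theta)+t_N\theta$ and $\alpha_i\ge 0$, one obtains the pointwise lower bound
\begin{equation*}
(Th)(L_i(x))\ge \alpha_i[t_1(1-\theta)+t_N\theta]+q_i(x),
\end{equation*}
so $(Th)(L_i(x))\ge t_i(1-\theta)+t_{i+1}\theta$ reduces to the inequality
\begin{equation*}
\frac{P_i^*(\theta)}{Q_i^*(\theta)}\ge (t_i-\alpha_i t_1)(1-\theta)+(t_{i+1}-\alpha_i t_N)\theta,\qquad \theta\in[0,1].
\end{equation*}

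Next I would clear the denominator using $Q_i^*(\theta)>0$ and employ the degree-elevation identity in (\ref{HALDIASANGEETAeq15a}), together with $(1-\theta)=(1-\theta)^3+2(1-\theta)^2\theta+(1-\theta)\theta^2$ and $\theta=(1-\theta)^2\theta+2(1-\theta)\theta^2+\theta^3$, to express both sides as cubics in the basis $\{(1-\theta)^{3-k}\theta^k\}_{k=0}^3$. A direct expansion puts the product $[(t_i-\alpha_i t_1)(1-\theta)+(t_{i+1}-\alpha_i t_N)\theta]\,Q_i^*(\theta)$ in this basis with coefficients $u_i\tilde t_i$, $(2u_i+v_i)\tilde t_i+u_i\tilde t_{i+1}$, $u_i\tilde t_i+(2u_i+v_i)\tilde t_{i+1}$, and $u_i\tilde t_{i+1}$, where $\tilde t_i=t_i-\alpha_i t_1$ and $\tilde t_{i+1}=t_{i+1}-\alpha_i t_N$. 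Comparing termwise with $U_i,V_i,W_i,Z_i$ from Section \ref{HALDIASANGEETAsec3} yields four sufficient inequalities; since each basis function is non-negative, non-negativity of all four coefficients of the difference is enough.

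Finally I would decode the four coefficient inequalities into the stated parameter conditions. The inequalities $U_i\ge u_i\tilde t_i$ and $Z_i\ge u_i\tilde t_{i+1}$ simplify to $\alpha_i\le(y_i-t_i)/(y_1-t_1)$ and $\alpha_i\le(y_{i+1}-t_{i+1})/(y_N-t_N)$, giving the upper bound on $\alpha_i$ in the theorem. The middle two inequalities split after factoring into a term linear in $u_i$ plus $v_i$ times $(y_i-t_i)-\alpha_i(y_1-t_1)$ (respectively $(y_{i+1}-t_{i+1})-\alpha_i(y_N-t_N)$); the already-established $\alpha_i$-constraint makes these factors positive, so solving for $v_i$ produces precisely $v_{9i}$ and $v_{10i}$. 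The main obstacle will be the bookkeeping in this last step: one must collect the $(3u_i+v_i)$ term from $V_i$ as $(2u_i+v_i)+u_i$ so that the $v_i$-coefficient cleanly matches $\tilde t_i$ (resp.\ $\tilde t_{i+1}$) on the other side, and track the sign when dividing through by the small positive quantity. Once invariance of $\mathcal{G}_t$ is established, the fixed point theorem gives $\Psi\in\mathcal{G}_t$ and the proof concludes.
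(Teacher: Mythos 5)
Your proposal is correct and follows essentially the same route as the paper: reduce to the per-subinterval implication $\Psi(x)>t(x)\Rightarrow\Psi(L_i(x))>t(L_i(x))$, clear the positive denominator $Q_i^*(\theta)$, degree-elevate both sides into the cubic Bernstein basis, and demand non-negativity of the four coefficients, which decodes exactly into the stated bounds on $\alpha_i$ and into $v_{9i},v_{10i}$. The only (harmless) presentational differences are that you phrase the iterative step as invariance of the closed set $\mathcal{G}_t$ under the RB operator rather than the paper's informal ``constructed iteratively'' argument, and you argue positivity directly from the non-negative Bernstein basis instead of the paper's substitution $\theta=\nu/(\nu+1)$ followed by the Schimdt--He{\ss} criterion restricted to the region $R_1$ --- which amounts to the identical sufficient condition.
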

\begin{proof}
Since $\Psi(x_i) =y_i> t_i$ for all $i\in \mathbb{N}_N$. Therefore to prove $\Psi(\tau)> m \tau+k$ for all $\tau\in I$, it is sufficient to verify that $\Psi(x)> m x+k, x\in I$ implies $\Psi(L_i(x)) > m L_i(x)+k$ for $x\in I$ and for all $i\in \mathbb{N}_{N-1}$. Assume $\Psi(x)> m x+k$. We need to make sure that
\begin{equation}\label{HALDIASANGEETAeq22}
\alpha_i \Psi(x)+\frac{P_i^*(\theta)}{Q_i^*(\theta)}>m(a_ix+b_i)+k.
\end{equation}
We shall impel the scaling parameters such that $0\le\alpha_i<a_i$ for all $i\in \mathbb{N}_{N-1}$. Since $Q_i^*(\theta)>0$ and keeping in mind the assumptions $\Psi(x)> m x+k$, cross multiplying and rearranging it can be observed that (\ref{HALDIASANGEETAeq22}) holds if
\begin{equation}\label{HALDIASANGEETAeq23}
\alpha_i(m x+k)Q_i^*(\theta)+ P_i^*(\theta)- (ma_ix+mb_i+k)Q_i^*(\theta)>0, \theta\in[0,1].
\end{equation}
Substituting $x=x_1+\theta(x_N-x_1)$, the expression for $P_i^*(\theta)$ and using the degree elevated form of $Q_i^*(\theta)$ from Eqn. (\ref{HALDIASANGEETAeq15a}). After some rearrangement, (\ref{HALDIASANGEETAeq23}) reduces to
 \begin{equation}\label{HALDIASANGEETAeq24}
U_i^* (1-\theta)^3 + V_i^*\theta(1-\theta)^2 + W_i^* \theta^2 (1-\theta) +  Z_i^* \theta^3 >0, \theta \in [0,1],
\end{equation}
where\\
\begin{equation*}
\begin{split}
U_i^*&= u_i[y_i-t_i-\alpha_i(y_1-t_1)],\\
V_i^*&= u_i[2(y_i-t_i)+(y_i-t_{i+1})+h_i d_i-\alpha_i\{2(y_1-t_1)+(y_1-t_N)+(x_N-x_1)d_1\}]\\&+v_i[y_i-t_i-\alpha_i(y_1-t_1)],\\
W_i^*&= u_i[(y_{i+1}-t_i)+2(y_{i+1}-t_{i+1})-h_i d_{i+1}-\alpha_i\{(y_N-t_1)+2(y_N-t_N)\\&-(x_N-x_1)d_N\}]+v_i[y_{i+1}-t_{i+1}-\alpha_i(y_N-t_N)],\\
Z_i^*&= u_i[y_{i+1}-t_{i+1}-\alpha_i(y_N-t_N)].
\end{split}
\end{equation*}
With the substitution $\theta=\frac{\nu}{\nu+1}$,  (\ref{HALDIASANGEETAeq24}) is equivalent to $Z_i^*\nu^3 +W_i^* \nu^2 + V_i^* \nu + U_i^*>0$ for all $\nu>0$.
We know that \cite{SH1}, a cubic polynomial $\rho(\xi)=a \xi^3 +b \xi^2+c \xi+d \ge 0$ for all $\xi
\ge0$, if and only if $(a, b, c, d) \in R_1 \cup R_2$, where
\begin{equation*}
\begin{split}
&R_1=\{(a, b, c, d):a\geq0, b\geq0, c\geq0, d\geq0\},\\
&R_2=\{(a, b, c, d): a\geq0, d\geq0, 4ac^3+4db^3+27a^2 d^2-18 a b c
d-b^2c^2\geq0\}.
\end{split}
\end{equation*}
As the condition involved in $R_2$ is computationally cumbersome, to obtain a set of sufficient condition for the positivity $Z_i^*\nu^3 +W_i^* \nu^2 + V_i^* \nu + U_i^*>0$, we use comparatively efficient and  reasonably acceptable choice of parameters determined by $R_1$. Thus the polynomial in (\ref{HALDIASANGEETAeq24}) is positive if $U_i^*> 0, V_i^*> 0, W_i^*> 0$ and $Z_i^*> 0$ are satisfied. It is straight forward to see that $U_i^*> 0$ is satisfied if $\alpha_i<\frac{y_i-t_i}{y_1-t_1}$ and $Z_i^*> 0$ if $\alpha_i<\frac{y_{i+1}-t_{i+1}}{y_N-t_N}$. It is plain to see that the additional conditions on the shape parameters $u_i>0$ and $v_i>0$ prescribed in the theorem ensure the positivity of  $V_i^*$ and $W_i^*$. This completes the proof.
\end{proof}
\begin{remark}\label{r2pprm4a}
In a similar fashion one can deal with the problem of curve lies below the line. Taking $m=k=0$ in the preceding theorem, we reobtain the positivity of RCSFIF studied elaborately in Theorem 1. If assume $\alpha_i=0$ for all $i\in \mathbb{N}_{N-1}$, which provide sufficient condition for the traditional rational cubic spline to lie above the line, see \cite{SHH}.
\end{remark}
\section{Convergence  Analysis of Rational Cubic Spline FIFs}\label{HALDIASANGEETAsec5}
In this section, the uniform error bound for a RCSFIF $\Psi$ is obtained from the Hermite data  $\{(x_i,y_i,d_i): i \in \mathbb{N}_N\}$ satisfying $x_1<x_2<\dots<x_N$, being interpolated and generated from a function $\Phi \in\mathcal{C}^3(I)$. By using $\|\Phi- \Psi\|_{\infty} \le \|\Phi - C\|_{\infty} + \|C - \Psi\|_{\infty}$, we will derive the convergence of $\Psi$ to the original function $\Phi$ using the convergence results for its classical counterpart $C$ and the uniform distance between $\Psi$ and $C$. The first summand in the above inequality is obtained from Theorem 7.1 of \cite{SHH} as $\|\Phi-C\|_\infty \leq \frac{1}{2}\|\Phi^{(3)}\|_\infty \underset{1\leq i\leq {N-1}}\max \{h_i^3 c_i\}$, for some suitable constant $c_i$ independent of $h_i$. The rightmost summand is obtained by using the definition of the Read-Bajraktarevi\'{c} operators for which $ \Psi $ is a fixed point and by applying the Mean Value Theorem. To make our presentation simple, we introduce the following notations: $|y|_{\infty}
 = \max \{|y_i| :i \in \mathbb{N}_N\} $, $ |d|_{\infty} = \max \{|d_i| : i \in \mathbb{N}_N \} $, $ |u|_{\infty} = \max \{
|u_i| : i \in \mathbb{N}_{N-1} \}$, $ |v|_{\infty} = \max \{
|v_i| : i \in \mathbb{N}_{N-1} \}$, $|\alpha|_\infty=\max\{|\alpha_i|: i \in \mathbb{N}_{N-1}\}$, $ h = \max \{h_i : i \in \mathbb{N}_{N-1} \} $. The proof is just consequent upon strictly routine matter of simple calculations.
\begin{theorem}\label{RHFIFthm2}
Let $\Phi\in \mathcal{C}^3(I)$ be the original function, $\Psi$ and
$C$, respectively, be the RCSFIF and the classical
rational cubic interpolant for $\Phi$ with respect to the
interpolation data $\{(x_i,y_i,d_i) : i= 1,2,\dots,N\},
y_i=\Phi(x_i)$. Then,
\begin{eqnarray*}
\|\Phi-S\|_\infty &\leq \frac{1}{2}\|\Phi^{(3)}\|_\infty h^3 c +\frac{|\alpha|_\infty}{s(1-|\alpha|_\infty)}\Big\{|u|_\infty M+
\frac{1}{4}\big[(3|u|_\infty+|v|_\infty)M+|u|_\infty (h
|d|_\infty\\&+(x_N-x_1)\max\{|d_1|,|d_N|\})\big]\Big\},
\end{eqnarray*}
where $M=|y|_\infty+ \max\{|y_1|,|y_N|\}$,   $ s = \min\{ s_i : i \in \mathbb{N}_{N-1} \}$ with $s_i =u_i+\frac{1}{4}v_i$.
\end{theorem}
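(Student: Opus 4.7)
The plan is to split the error via the triangle inequality $\|\Phi - \Psi\|_\infty \le \|\Phi - C\|_\infty + \|\Psi - C\|_\infty$ and estimate each summand separately. The first summand is handled by the classical convergence result of Sarfraz--Hussain \cite{SHH} recalled in the preamble of the theorem, yielding $\|\Phi - C\|_\infty \le \tfrac{1}{2}\|\Phi^{(3)}\|_\infty h^3 c$. Everything then reduces to estimating $\|\Psi - C\|_\infty$, and for this one exploits the fact that $\Psi$ is the fixed point of the Read--Bajraktarevi\'c operator $T$ associated with $\mathcal{I}$, while by Remark~\ref{HALDIASANGEETArem2} the classical interpolant $C$ is itself a FIF---namely, the fixed point of the operator $T_0$ obtained from $\mathcal{I}$ by setting every scaling factor to zero.

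Since $T$ and $T_0$ share the same maps $L_i$ and the same denominators $Q_i^*(\theta)$, subtracting (\ref{HALDIASANGEETAeq9}) from (\ref{HALDIASANGEETAeq8}) yields, for every $x \in I$ and $i \in \mathbb{N}_{N-1}$,
\[ \Psi(L_i(x)) - C(L_i(x)) = \alpha_i\bigl[\Psi(x) - C(x)\bigr] + \alpha_i C(x) + \frac{P_i^*(\theta) - P_i^{*,0}(\theta)}{Q_i^*(\theta)}, \]
with $P_i^{*,0}$ the numerator of $C$ and $\theta = (x-x_1)/(x_N-x_1)$. A direct inspection of the coefficients in Section~\ref{HALDIASANGEETAsec3} shows that $P_i^* - P_i^{*,0}$ equals $-\alpha_i$ times an explicit cubic in the Bernstein-like basis $\{(1-\theta)^3, (1-\theta)^2\theta, (1-\theta)\theta^2, \theta^3\}$ whose coefficients depend only on $u_i, v_i, h_i, y_1, y_N, d_1, d_N$ and $x_N-x_1$; hence $|\alpha_i|$ factors out of the last two summands. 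Taking absolute values, then the sup over $x$, then the max over $i$, this identity becomes the self-referential inequality $\|\Psi - C\|_\infty \le |\alpha|_\infty\|\Psi - C\|_\infty + (|\alpha|_\infty/s)\mathcal{K}$, where $\mathcal{K}$ denotes the expression in braces in the theorem statement; rearrangement gives the prefactor $|\alpha|_\infty/(1-|\alpha|_\infty)$ together with the $1/s$.

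The main obstacle is to justify the uniform estimate $|C(x)| + |P_i^*(\theta) - P_i^{*,0}(\theta)|/(|\alpha_i|Q_i^*(\theta)) \le \mathcal{K}/s$. This is done by (i) bounding the Bernstein control polygon of $C$, which contributes the $|y|_\infty$ half of $M = |y|_\infty + \max\{|y_1|,|y_N|\}$ and invokes the Mean Value Theorem alluded to in the preamble to control endpoint-to-interior behaviour of $C$; (ii) bounding the Bernstein-like coefficients of $P_i^* - P_i^{*,0}$ using $\max\{|y_1|,|y_N|\}$ and $\max\{|d_1|,|d_N|\}$, contributing the other half of $M$; (iii) consolidating corner and mid-Bernstein terms via the elementary inequalities $(1-\theta)^3 + \theta^3 \le 1$ and $\theta(1-\theta) \le 1/4$, which produces the $|u|_\infty M$ summand and the $\tfrac14\{\cdots\}$ summand respectively; and (iv) handling the denominator via the monotonicity estimate $\theta(1-\theta)/Q_i^*(\theta) \le 1/(4 s_i)$ (sharp at $\theta = 1/2$, derived from monotonicity of $t \mapsto t/(u_i+v_i t)$ on $[0,1/4]$), followed by minimizing over $i$ to introduce $1/s$. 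Each step is elementary; the lengthy but unenlightening bookkeeping is the routine matter of simple calculations the authors refer to.
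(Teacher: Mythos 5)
Your overall strategy is exactly the one the paper intends: the paper offers no written proof beyond the remark that the bound follows from the triangle inequality $\|\Phi-\Psi\|_\infty\le\|\Phi-C\|_\infty+\|C-\Psi\|_\infty$, the Sarfraz--Hussain estimate for the first summand, and the fixed-point (Read--Bajraktarevi\'c) functional equations for $\Psi$ and $C$ for the second, declaring the rest ``routine calculation.'' Your decomposition $\Psi(L_i(x))-C(L_i(x))=\alpha_i[\Psi(x)-C(x)]+\alpha_i C(x)+[P_i^*(\theta)-P_i^{*,0}(\theta)]/Q_i^*(\theta)$, the observation that $P_i^*-P_i^{*,0}=-\alpha_i R_i$ for an explicit cubic $R_i$ depending only on $u_i,v_i,y_1,y_N,d_1,d_N,x_N-x_1$, and the resulting self-referential inequality leading to the factor $|\alpha|_\infty/(1-|\alpha|_\infty)$ are all correct and are precisely the intended route.

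There is, however, one concrete step in your bookkeeping that fails as written. In items (iii)--(iv) you bound the corner (Bernstein) terms by $(1-\theta)^3+\theta^3\le 1$ and then claim this produces the summand $|u|_\infty M/s$; but to divide the resulting quantity $u_i\cdot(\text{coefficient})$ by $s_i=u_i+\tfrac14 v_i$ you would need $Q_i^*(\theta)=u_i+v_i\theta(1-\theta)\ge s_i$, which is false: $Q_i^*(0)=Q_i^*(1)=u_i<s_i$ whenever $v_i>0$, and in fact $\sup_\theta[(1-\theta)^3+\theta^3]/Q_i^*(\theta)=1/u_i$. Treating the corner and middle terms separately therefore yields only the weaker constant $M+\tfrac{1}{4s}[\cdots]$ in place of $\tfrac{1}{s}\{|u|_\infty M+\tfrac14[\cdots]\}$. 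The theorem's sharper constant is nevertheless attainable, but only by bounding the \emph{whole} ratio at once: writing $t=\theta(1-\theta)\in[0,\tfrac14]$ and $(1-\theta)^3+\theta^3=1-3t$, each of the two ratios you must control has the form $[u_iA+(B-3u_iA)t]/(u_i+v_it)$ with $B\ge(3u_i+v_i)A$, whose derivative in $t$ has the sign of $u_i(B-3u_iA-v_iA)\ge0$; it is therefore increasing on $[0,\tfrac14]$ and maximized at $t=\tfrac14$, where it equals $(u_iA+\tfrac14B)/s_i$. Applying this to $|C|$ (with $A=|y|_\infty$) and to $|R_i|/Q_i^*$ (with $A=\max\{|y_1|,|y_N|\}$) and adding gives exactly the bracketed expression divided by $s$. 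With that replacement your argument closes; your remaining steps, including the appeal to $\theta(1-\theta)/Q_i^*(\theta)\le 1/(4s_i)$ for the middle terms, are sound.
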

\section{Numerical Examples}\label{HALDIASANGEETAsec6}
Consider the positive data set $\{(x_i,y_i)\}$=$\{(0,0.1), (0.4,1), (0.75,2), (1,5\}$. The derivative values at the knot points are estimated by using the arithmetic mean method (see \cite{GD85}) as $d_1=-1.5238,d_2=1.5238,  d_3=8.1905,  d_4=15.8095$. Our scheme has been implemented according to the choice of parameters given in Table 1. In Fig. 1(a), we do not follow the prescription given in Theorem 1 for which we obtain a nonpositive RCSFIF.  Fig. 1(b) represents a positive RCSFIF obtained through a choice of parameters as described in Theorem 1. It is noticed that the perturbation in  $\alpha_1$ affects the RCSFIF considerably in the first subinterval, namely $[x_1,x_2]$. By setting all the scaling factors to be zero, the classical rational cubic spline is obtained (see Fig. 1(c)). We calculate the bounds of scaling factors and shape parameters using Theorem 2 so that RCSFIF lying within the rectangle $[0,1]\times[0.1,5]$. Our choices of the scaling factors and shape parameter values are displayed in 
Table 1 and the corresponding RCSFIFs are generated in Figs. 1(d)-(e).\\
\noindent Consider the Hermite data set $\{(1,-1.2,0.85), (3.3,-1.1,-0.15), (4.6,-1,\\-0.4583), (7.2,4.5,-0.7861\}$, which lies above the line $y=-0.5x-1$. By choosing the scaling factors $|\alpha_i|<a_i$ and the shape parameters  $u_i>0$, $v_i>0$, we obtain the RCSFIF in Fig. 1(f) which does not follows the requirement to be above the line  $y=-0.5x-1$. This illustrates the importance of Theorem 3. Choosing the scaling factors and the shape parameters according to the prescription given in Theorem 3 (see Table 1), RCSFIF that lie above the line $y=-0.5x-1$ are shown in Fig. 1(g)-(i). We can notice the effects in the shape of the RCSFIF in the second subinterval due to changes in the scaling factor by $\alpha_2=0.1$ by comparing the RCSFIFs in Figs. 1(g)-(i). Same can be done for the shape parameters also. A classical rational cubic spline above the line $y=-0.5x-1$ is obtained by setting all the scaling factors to be zero as the prescription given in Remark 5.
\begin{center}
\begin{table}[h!]
\caption{Parameters for the rational cubic spline FIFs in Fig 1(a)-(f).}\label{table:surfacepatch_data}
\begin{center}
\scriptsize {
\begin{tabular}{|l |l| l| l|}\hline
$\hspace{0.2cm}Figure \hspace{0.6cm}$&$\hspace{0.2cm}Scaling factors  \hspace{0.6cm}$&$\hspace{0.2cm}Shape parameters\hspace{0.6cm}$\\ \hline
 ~~~1(a)&$\alpha=(-0.2,0.31,0.23)$&$u=(0.1,0.1,0.1)$,$v=(0.08,0.1,0.1)$\\\hline
 ~~~1(b)&$\alpha=(0.2,0.31,0.23)$&$u=(0.1,0.1,0.1)$,$v=(0.08,0.1,0.1)$\\ \hline
 ~~~1(c)&$\alpha=(0,0,0)$&$u=(0.1,0.1,0.1)$,$v=(0.08,0.1,0.1)$ \\ \hline
 ~~~1(d)&$\alpha=(0.1, 0.3,  0.2)$&$u=(0.1,0.1,0.1)$,$v=(0.26,0.1,0.1)$\\\hline
 ~~~1(e)&$\alpha=(0, 0,  0)$&$u=(0.1,0.1,0.1)$,$v=(3.8,0.1,0.1)$\\ \hline
 ~~~1(f)&$\alpha=(-0.3, -0.2,  -0.4)$&$u=(0.1,0.1,0.1)$,$v=(3.8,0.1,0.1)$\\\hline
 ~~~1(g)&$\alpha=(0.17, 0.2,  0.4)$&$u=(0.1,0.1,0.1)$,$v=(3.8,0.1,0.1)$\\ \hline
 ~~~1(h)&$\alpha=(0.17,0.1,0.4)$&$u=(0.1,0.1,0.1)$,$v=(3.8,0.1,0.1)$ \\ \hline
 ~~~1(i)&$\alpha=(0,0,0)$&$u=(0.1,0.1,0.1)$,$v=(3.8,0.1,0.1)$ \\ \hline
\end{tabular}}
\end{center}
\end{table}
\end{center}

\begin{figure}[h!]\label{r2ppfig2}
\begin{center}
\begin{minipage}{0.32\textwidth}
\epsfig{file=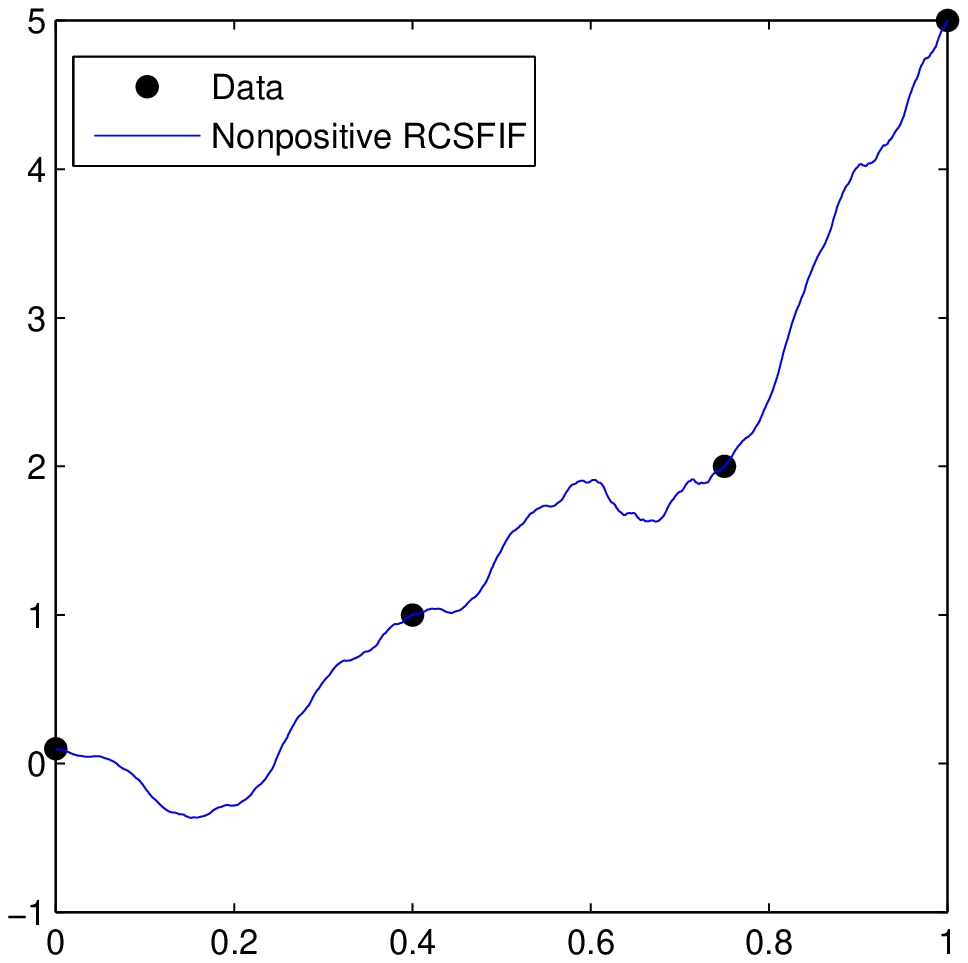,scale=0.29} \centering{\\(a): Nonpositive RCSFIF}
\end{minipage}\hfill
\begin{minipage}{0.32\textwidth}
\epsfig{file=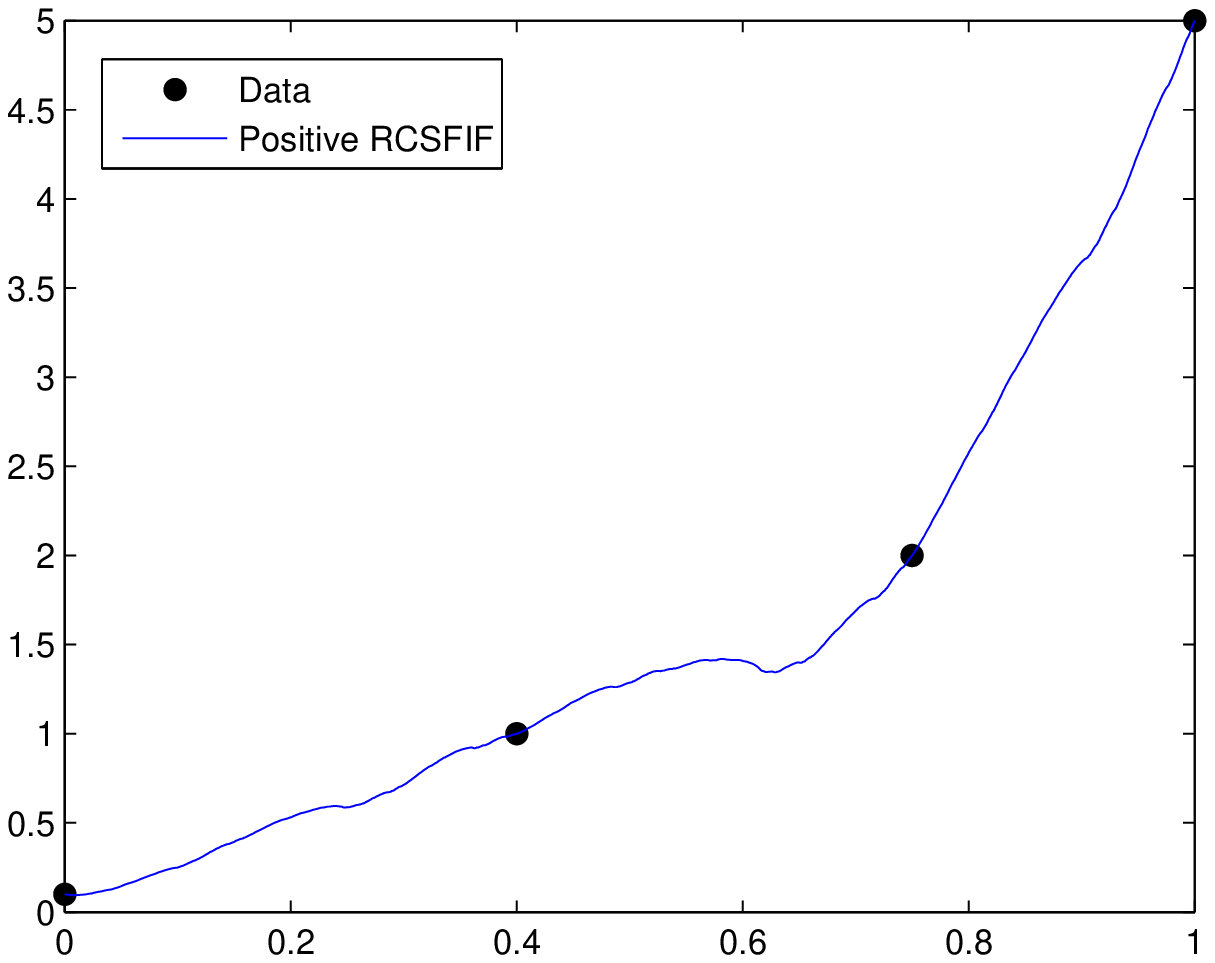,scale=0.29} \centering{\\(b): Positive RCSFIF}
\end{minipage}\hfill
\begin{minipage}{0.32\textwidth}
\epsfig{file =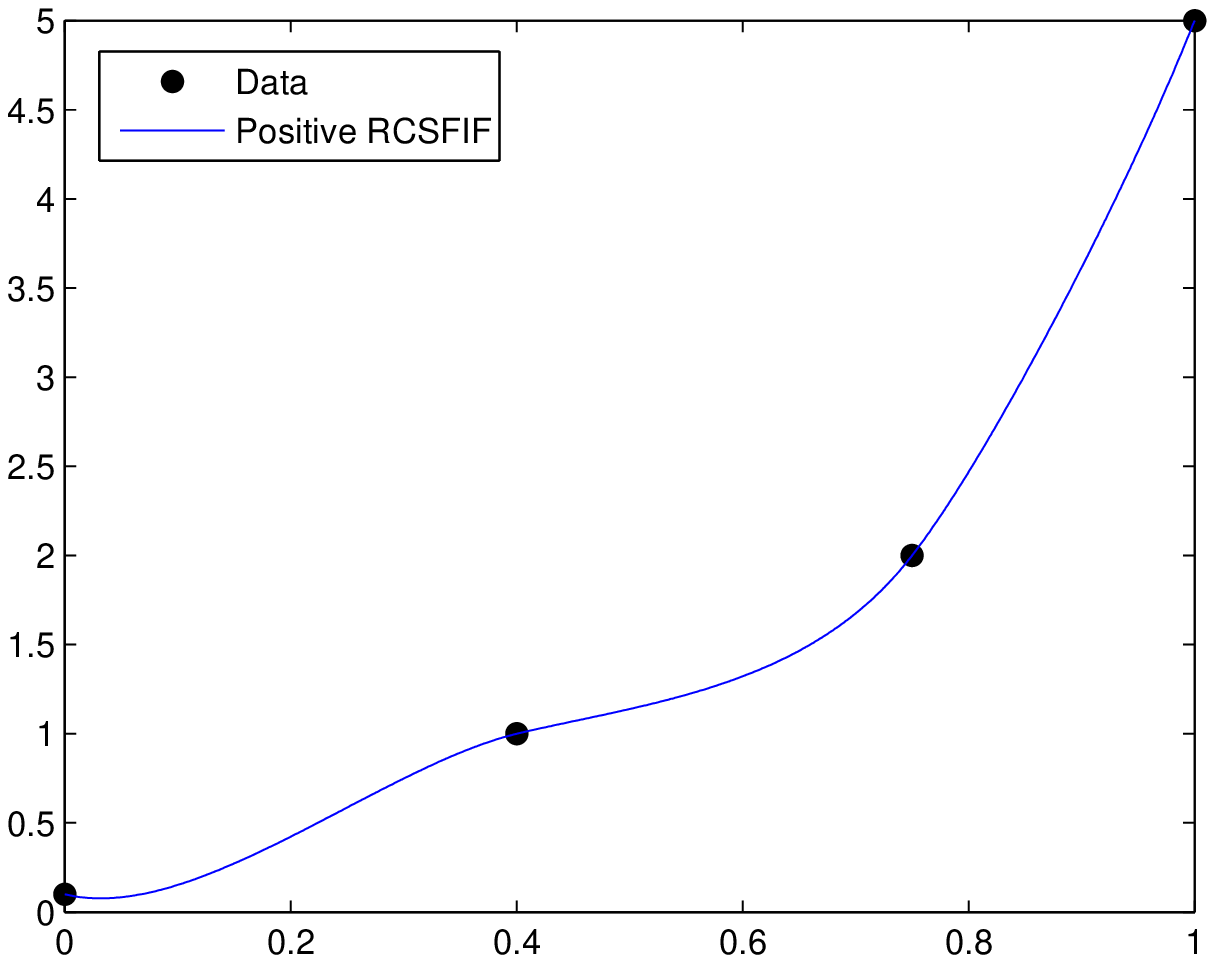,scale=0.29} \centering{\\(c): Classical Positive Rational Cubic Spline}
\end{minipage}\hfill
\begin{minipage}{0.32\textwidth}
\epsfig{file =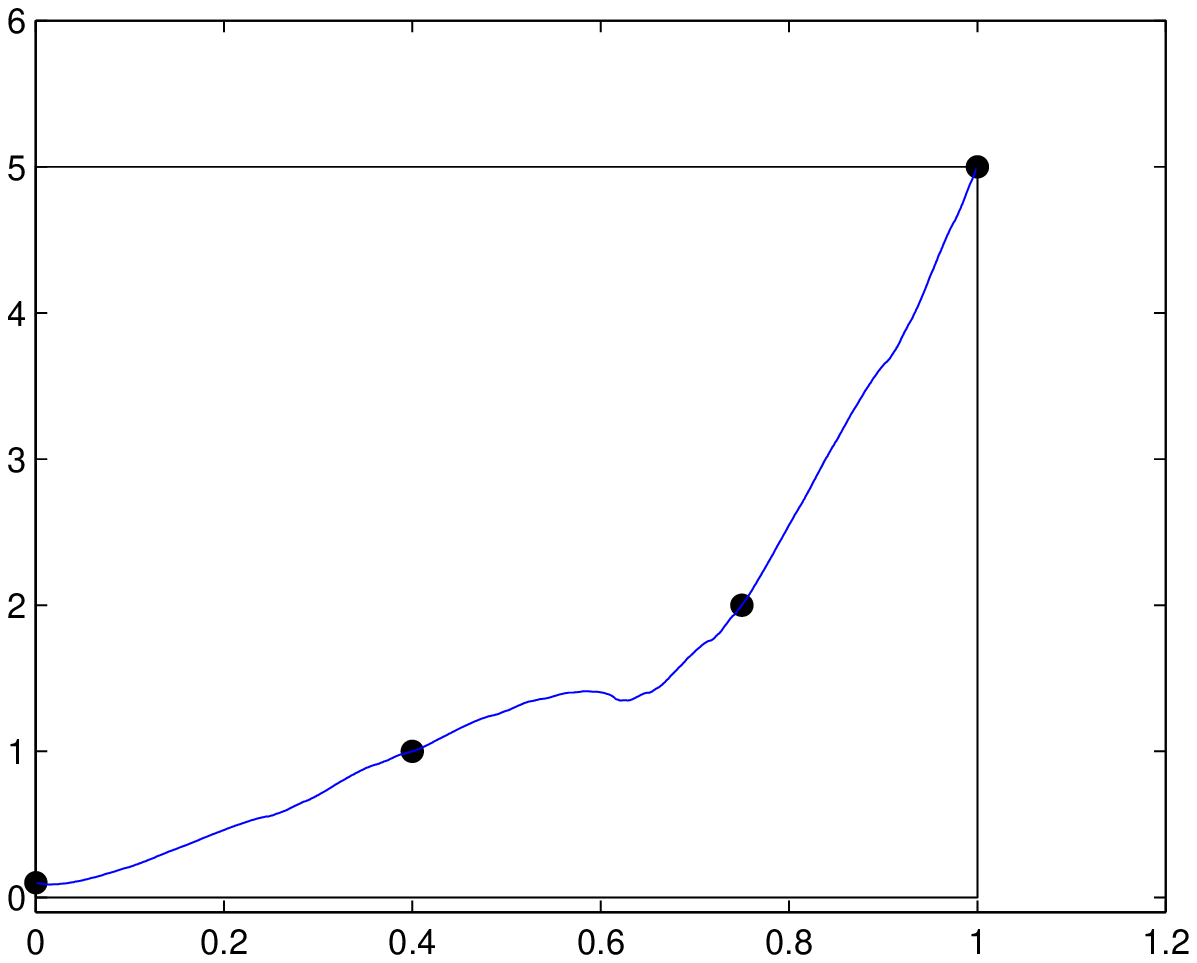,scale=0.29} \centering{\\(d): Rational FIF within rectangle $[0,1]\times[0.1,5]$}
\end{minipage}\hfill
\begin{minipage}{0.32\textwidth}
\epsfig{file=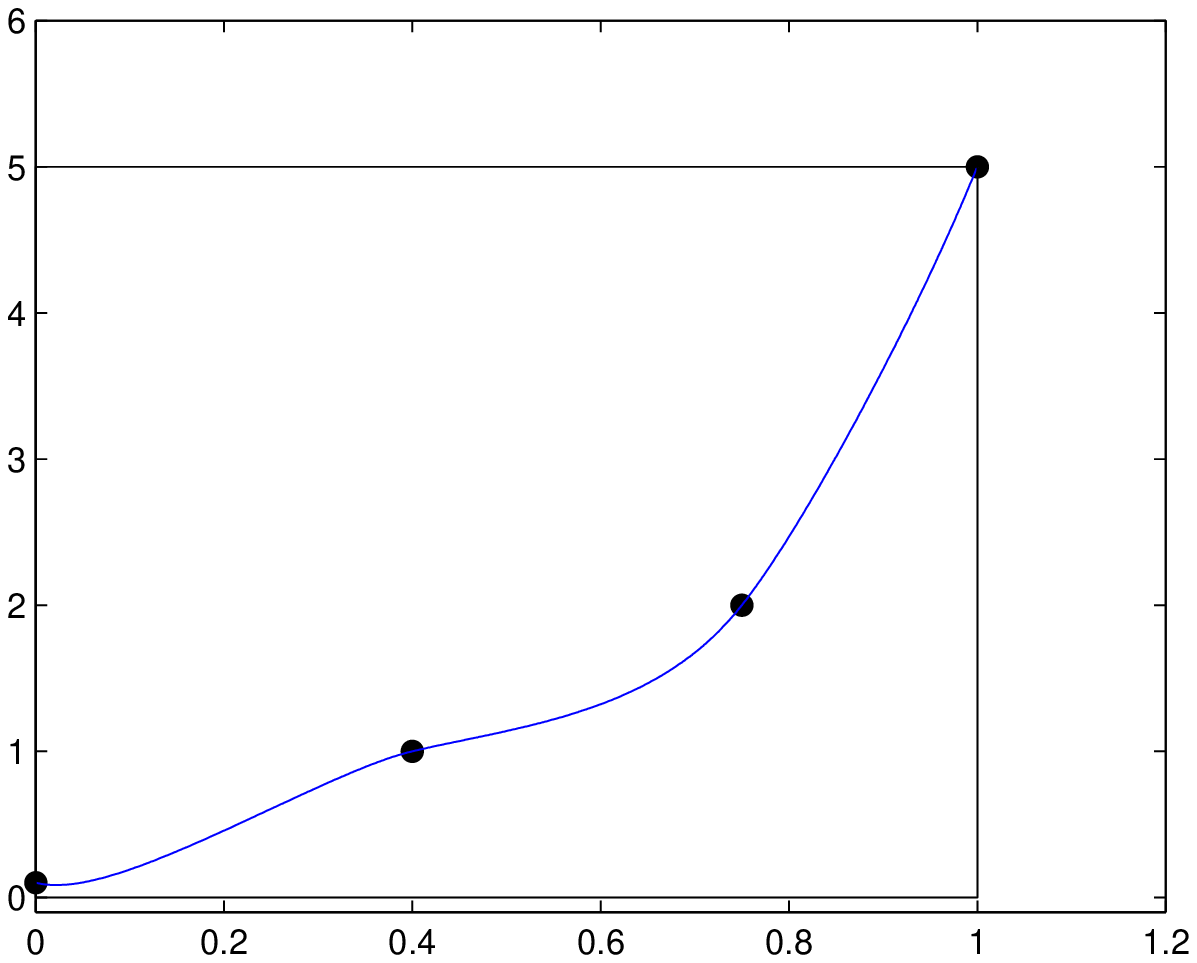,scale=0.29} \centering{\\(e): Classical Rational Cubic Spline within $[0,1]\times[0.1,5]$}
\end{minipage}\hfill
\begin{minipage}{0.32\textwidth}
\epsfig{file =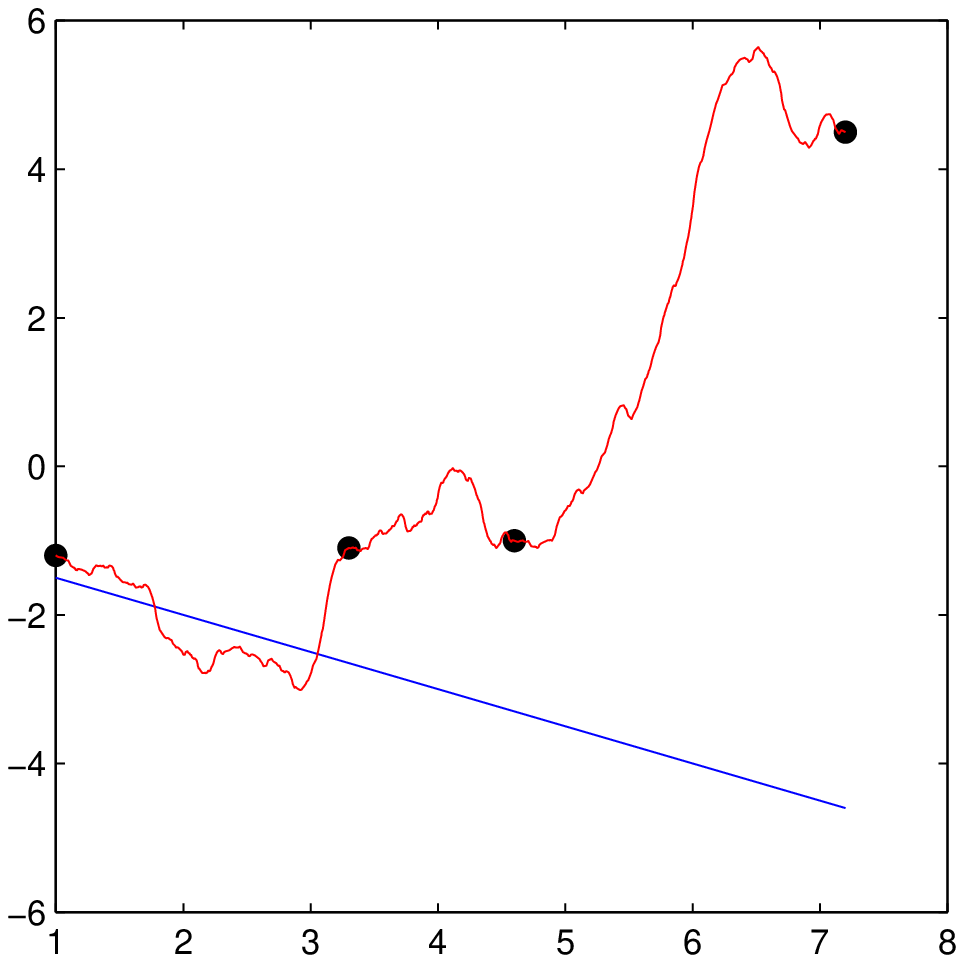,scale=0.29} \centering{\\(f): Rational FIF and the line $y=-0.5x-1$}
\end{minipage}\hfill
\begin{minipage}{0.32\textwidth}
\epsfig{file=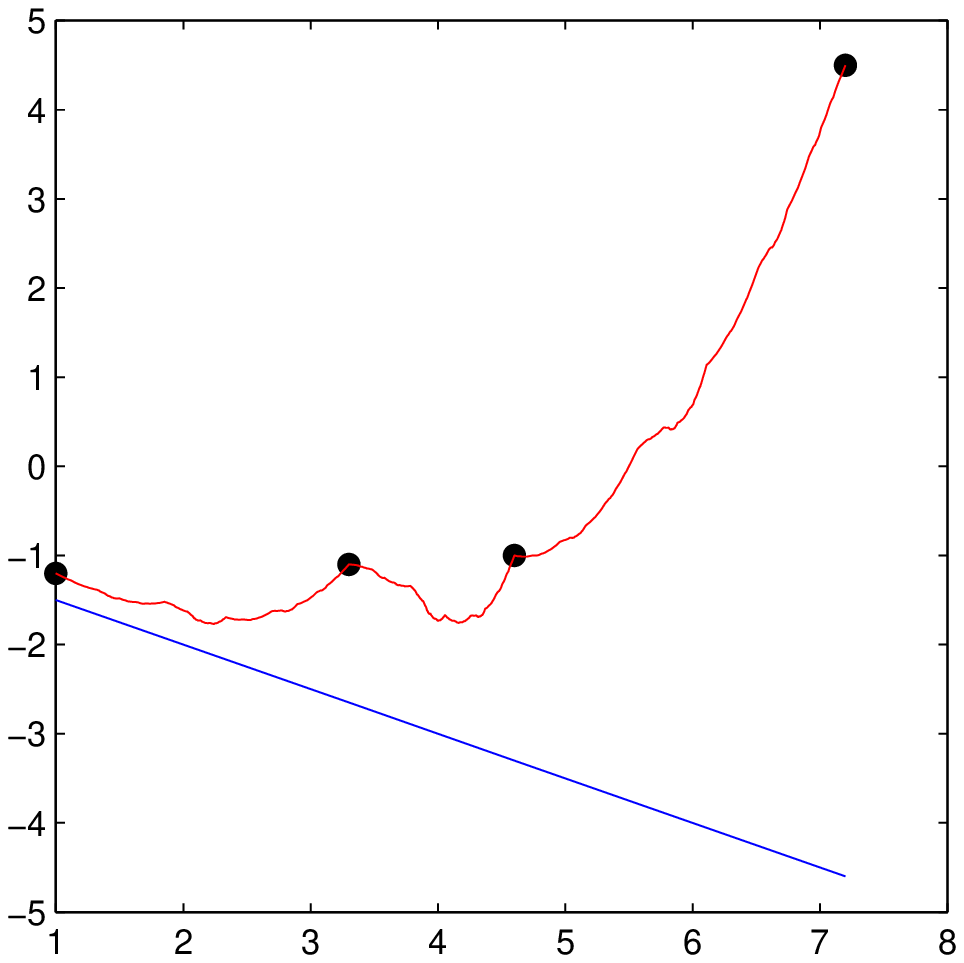,scale=0.29} \centering{\\(g): Rational FIF above the line $y=-0.5x-1$}
\end{minipage}\hfill
\begin{minipage}{0.32\textwidth}
\epsfig{file=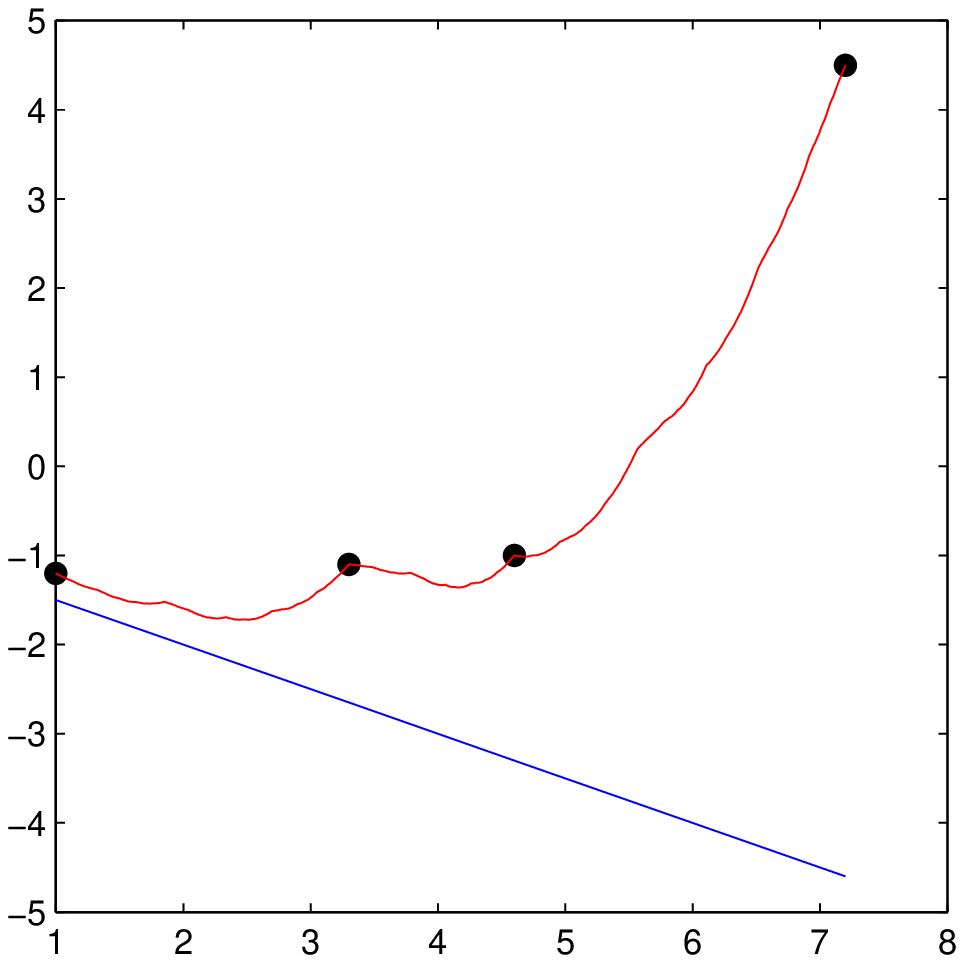,scale=0.29} \centering{\\(h): Rational FIF above the line $y=-0.5x-1$}
\end{minipage}\hfill
\begin{minipage}{0.32\textwidth}
\epsfig{file=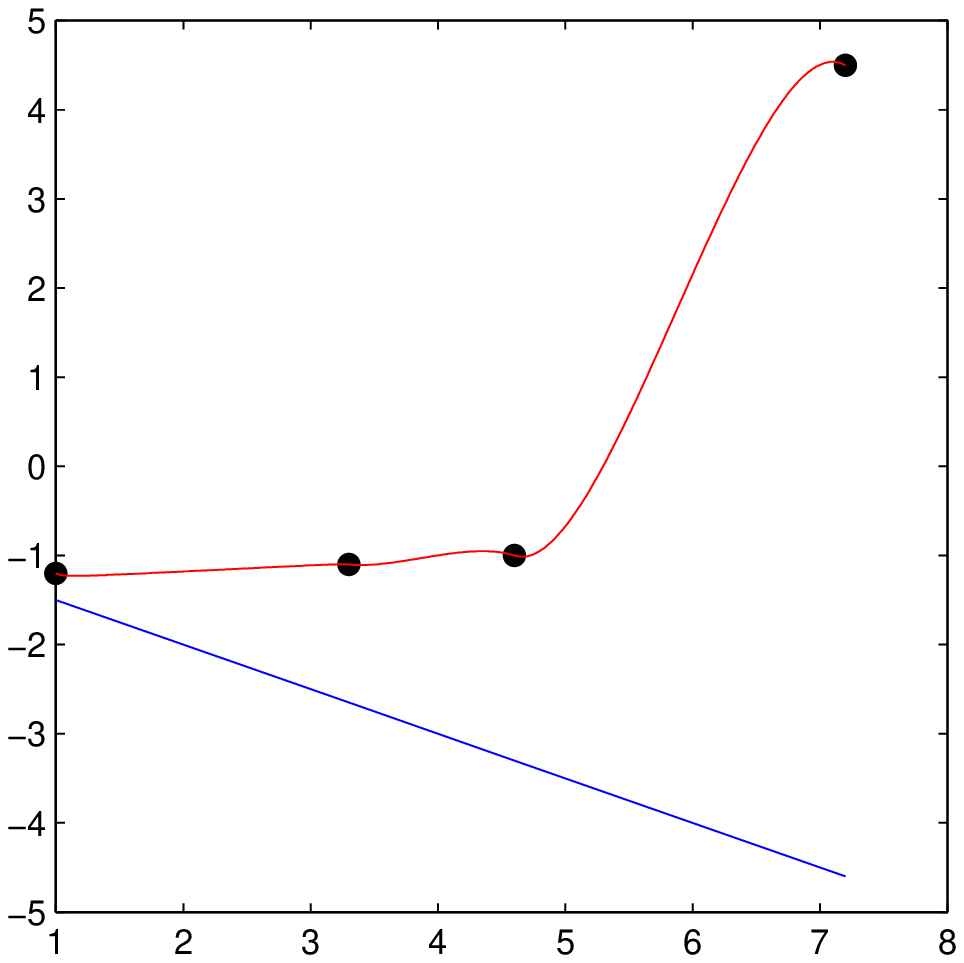,scale=0.29} \centering{\\(i): Classical Rational Cubic Spline above the line $y=-0.5x-1$}
\end{minipage}\hfill\\
\caption{Different types of constraint RCSFIFs) (the
interpolating data points are given by the circles.)}\label{r2ppfig2}
\end{center}
\end{figure}
\section{Conclusion}\label{HALDIASANGEETAsec7}
In this paper, we have constructed RCSFIF with two family of shape parameters. We identify scaling factors and shape parameters to obtain RCSFIF lies inside a rectangle as well as above a prescribed straight line. The scaling parameters and shape parameters play an important role in determining the shape of a RCSFIF. Thus, according to the need of an experiment for simulating objects with smooth geometrical shapes, a large flexibility in the choice of a suitable interpolating smooth fractal interpolant is offered by our approach. As in the case of vast applications of classical rational interpolants in CAM, CAD, and other mathematical, engineering applications, it is felt that RCSFIFs can find rich applications in some of these areas. Further, as classical piecewise cubic Hermite interpolant, $\mathcal{C}^1$-cubic Hermite FIF \cite{CV2}, and $\mathcal{C}^1$-rational cubic spline \cite{SHH} are special cases of RCSFIFs, it is possible to use RCSFIFs for mathematical and engineering problems where these 
approaches does not work satisfactorily. The upper bound for the error between the original function $\Phi$ and the RCCHFIF $\Psi$ is deduced. The authors are in the process of completing to extend this work for the 3D data which will appear somewhere else.


\begin{thebibliography}{9}

\bibitem{B1} Barnsley, M. F.: Fractal functions and
  interpolation. Constr. Approx. 2(4), 303--329 (1986).

\bibitem{B2} Barnsley, M. F., Harrington, A. N.: The calculus of fractal functions.
  J. Approx. Theory 57(1), 14--34 (1989).

\bibitem{B3}  Barnsley, M. F., Elton, J., Hardin, D., Massopust, P: Hidden variable fractal interpolation functions. SIAM J. Math. Anal. 20(5) (1989) 1218-1242.
\bibitem{BPL} Bouboulis, P., Dalla, L.: Hidden variable vector valued fractal interpolation functions. Fractals 13(3), 227--232 (2005).





\bibitem{CV2} Chand, A. K. B.,  Viswanathan, P.: A constructive approach to cubic Hermite fractal interpolation
function and its constrained aspects, BIT Numer. Math. 53(4), 841--865 (2013).


\bibitem{CVN} Chand, A. K. B., Vijender, N., Navascu\'{e}s, M. A.: Shape preservation of scientific data through rational
fractal splines, Calcolo. 51, 329--362 (2013).

\bibitem{CK6} Chand, A. K. B., Katiyar, S. K., Viswanathan, P: Approximation using hidden variable fractal interpolation functions, J. Fractal Geom. 2(1), 81-114 (2015).

\bibitem{CNVS} Chand, A. K. B., Navascués, M. A., Viswanathan, P., Katiyar, S. K.: Fractal trigonometric polynomials for restricted range approximation, Fractals 24(2), 11 pp. (2016).

\bibitem{DQ} Duan, Q., Djidjeli, K., Price, W.G., Twizell, E.H.: A rational cubic spline based on function values, Comput $\&$ Graph. 22,  479--486 (1998).
\bibitem{DQ1} Duan, Q.,  Xu, G.,  Liu,  A.,  Wang, X., Cheng, F.: Constrained interpolation using rational cubic spline with linear denominators, Korean J. Comput. $\&$ Appl. Math. 6, 203--215 (1999).

\bibitem{GD85} Delbourgo, R., Gregory, J. A.: Shape preserving piecewise rational interpolation. SIAM J. Stat. Comput.  6(4), 967--976 (1985).

\bibitem{FC} Fritsch, F. N., Carlson, R. E.: Monotone piecewise cubic interpolations, SIAM J. Numer. Ana. 17(2), 238--246  (1980).

\bibitem{CK7} Katiyar, S. K., Chand, A. K. B., Navascu\'{e}s, M. A.: Hidden Variable $\mathbf{A}$-Fractal Functions and Their Monotonicity Aspects, Rev. R. Acad. Cienc. Zaragoza in Press.

\bibitem{PRM} Massopust P. R.: Fractal Functions, Fractal Surfaces and Wavelets, Academic Press, 1994.

\bibitem{N1} Navascu\'{e}s, M. A.: Fractal polynomial interpolation, Z. Anal. Anwend. 25(2), 401--418 (2005).


\bibitem{N4} Navascu\'{e}s, M. A.: Fractal approximation, Complex Anal. Oper. Theory, 4(4), 953--974 (2010).

\bibitem{NS3} Navascu\'{e}s, M. A., Sebasti\'{a}n, M. V.: Smooth fractal interpolation, J. Inequal. Appl. Article ID 78734, 20 pp, (2004).

\bibitem{NVCSS} Navascu\'{e}s, M. A., Viswanathan, P., Chand, A. K. B., Sebasti\'{a}n, M. V., Katiyar, S. K.: Fractal bases for Banach spaces of smooth functions, Bull. Aust. Math. Soc. 92(3), 405--419 (2015).


\bibitem{SH1} Schimdt, J. W., He\ss, W.: Positivity of cubic  polynomial on intervals and positive spline interpolation, BIT Numer. Anal. 28, 340--352 (1988).
\bibitem{SH} Sarfraz, M., Hussain, M. Z.: Data visualization using rational spline interpolation, J. Comp. Appl. Math. 189, 513--525 (2006).
\bibitem{SHN} Sarfraz, M., Hussain, M. Z., Nisar, A.: Positive data modeling using spline function, Appl. Math. Comp. 216, 2036--2049 (2010).
\bibitem{SHH} Sarfraz, M., Hussain, M. Z., Hussain, M.: Modeling rational spline for visualization of shaped data, J. Numer. Math. 21(1), 63--87 (2013).

\bibitem{WY} Wang, H. Y., Yu, J. S.: Fractal interpolation functions with variable parameters and their analytical properties, J. Approx. Theory.  175, 1--18 (2013).
\end{thebibliography}
\end{document}